\theoremstyle{plain} 
\newtheorem{thm}{Theorem}[section]
\newtheorem{cor}[thm]{Corollary}
\newtheorem{lm}[thm]{Lemma}
\newtheorem{clm}[thm]{Claim}
\newtheorem{subclm}[thm]{Subclaim}
\theoremstyle{definition}
\newtheorem{df}[thm]{Definition}
\numberwithin{equation}{section}
\newcommand{\m}[1]{{\mathbf{\uppercase{#1}}}}
\newcommand{\Con}{\mathrm{Con}}
\newcommand{\Sub}{\mathsf{S}}
\newcommand{\Prod}{\mathsf{P}}
\newcommand{\solv}{\stackrel{s}{\sim}}
\newcommand{\typ}{{\mathrm{typ}}}
\newcommand{\qedsymbdiamond}{\renewcommand{\qedsymbol}{$\diamond$}}
\begin{document}

\title[Neutrabelian algebras]{Neutrabelian algebras}

\author{Keith A. Kearnes}
\address[Keith A. Kearnes]{Department of Mathematics\\
University of Colorado\\
Boulder, CO 80309-0395\\
USA}
\email{Keith.Kearnes@Colorado.EDU}

\author{Connor Meredith}
\address[Connor Meredith]{Department of Mathematics\\
University of Colorado\\
Boulder, CO 80309-0395\\
USA}
\email{Connor.Meredith@Colorado.EDU}

\author{\'Agnes Szendrei}
\address[\'Agnes Szendrei]{Department of Mathematics\\
University of Colorado\\
Boulder, CO 80309-0395\\
USA}
\email{Szendrei@Colorado.EDU}

\thanks{This material is based upon work supported by
  the National Science Foundation grant no.\ DMS 1500254,
  the Hungarian National Foundation for Scientific Research (OTKA)
  grant no.\ K115518, and
  the National Research, Development and Innovation Fund of Hungary (NKFI)
  grant no.\ K128042.}

\subjclass[2010]{Primary: 08A30; Secondary: 08B10, 08C20}
\keywords{cube term, dualizable algebra,
  modular commutator, neutrabelian algebra, split centralizer condition}

\begin{abstract}
  We introduce ``neutrabelian algebras'',
  and prove that finite, hereditarily neutrabelian algebras
  with a cube term are dualizable.
\end{abstract}

\maketitle

\section{Introduction}
This paper investigates the relationship between
two commutator properties
for finite algebras in congruence modular varieties.
The investigation is motivated by the problem
of determining which finite algebras in residually
small congruence modular varieties are dualizable in the sense
of \cite{clark-davey}, i.e., which finite algebras
in residually
small congruence modular varieties may serve
as the character algebra for a natural duality.

The first commutator property, new to this paper,
is the property of being {\bf neutrabelian}.
This word is a portmanteau of \emph{neutral} and
\emph{abelian}. Recall that if
$\m a$ is any algebra in a congruence modular variety,
then
\begin{equation}\label{monotonicity}
0\leq [\alpha,\beta]\leq \alpha\wedge\beta
\end{equation}
for any two congruences $\alpha$ and $\beta$ on $\m a$.
The (universally quantified) commutator identity (C4),
\[
[\alpha,\beta]=0,
\]
represents one extreme type of commutator behavior,
where the left inequality in (\ref{monotonicity}) is equality
for all $\alpha, \beta\in\Con(\m a)$.
An algebra satisfying (C4) is called {\bf abelian}.
The other extreme behavior,
where the right inequality in (\ref{monotonicity}) is equality
for all $\alpha, \beta\in\Con(\m a)$,
is represented by the commutator identity (C3),
\[
[\alpha,\beta]=\alpha\wedge\beta.
\]
An algebra satisfying (C3) is called {\bf neutral}.

The neutrabelian property is a combination of (C3) and (C4).

\begin{df} \label{neutrabelianSI}
  Let $\m a$ be a subdirectly irreducible algebra (= SI)
  in a congruence modular variety. Let $\mu$
  be the monolith of $\m a$ and let $\nu=(0:\mu)$ be the centralizer
  of the monolith. $\m a$ is a
{\bf neutrabelian SI} if $\nu$ is comparable
with all congruences of $\m a$, and the commutator operation
on $\Con(\m a)$ satisfies
\begin{enumerate}
\item[``(C4)''] $[\alpha,\beta] = 0$ if $\alpha$ and $\beta$ are both
  contained in $\nu$, and
\item[``(C3)''] $[\alpha,\beta] = \alpha\wedge \beta$ otherwise.
\end{enumerate}
\end{df}

In words, this says that a neutrabelian SI is 
a subdirectly irreducible algebra where $\nu$
is comparable to all other congruences and the commutator
satisfies {\rm (C4)} below $\rho$ and {\rm (C3)} elsewhere.
An SI algebra is abelian if and only if it is
a neutrabelian SI
with $\nu=1$, while an SI algebra is
neutral if and only if it is
a neutrabelian SI with $\nu=0$.

The shape of $\Con(\m a)$, when $\m a$ is a neutrabelian SI,
is depicted in Figure~\ref{fig1}. The lattice on the left
indicates the situation where the monolith of $\m a$ is nonabelian,
while the lattice on the right indicates the situation where
the monolith is abelian.

\begin{figure}[!htbp]
\setlength{\unitlength}{1truemm}
\begin{picture}(80,70)
\put(-10,5)
{
\begin{tikzpicture}[scale=.8]
\draw [fill] (1,0) circle [radius=0.1];
\draw [fill] (1,1) circle [radius=0.1];
\draw [fill] (1,7) circle [radius=0.1];
\draw[line width=1.2pt] (1,0) -- (1,1);
\draw[line width=1.2pt] (1,4) ellipse (1 and 3);

\node at (2,0){$0=\nu$};
\node at (1.8,1){$\mu$};
\node at (1.8,7){$1$};

\draw[line width=1.2pt] [decorate,decoration={brace,amplitude=10pt,mirror,raise=4pt},yshift=0pt]
(3.2,0) -- (3.2,7) node [black,midway,xshift=1.5cm] {\footnotesize
neutral};

\draw [fill] (9,0) circle [radius=0.1];
\draw [fill] (9,1) circle [radius=0.1];
\draw [fill] (9,4) circle [radius=0.1];
\draw [fill] (9,7) circle [radius=0.1];
\draw[line width=1.2pt] (9,0) -- (9,1);
\draw[line width=1.2pt] (9,2.5) ellipse (1 and 1.5);
\draw[line width=1.2pt] (9,5.5) ellipse (1 and 1.5);

\node at (9.8,0){$0$};
\node at (9.8,1){$\mu$};
\node at (9.8,4){$\nu$};
\node at (9.8,7){$1$};

\draw[line width=1.2pt] [decorate,decoration={brace,amplitude=10pt,mirror,raise=4pt},yshift=0pt]
(10.5,0) -- (10.5,3.9) node [black,midway,xshift=1.5cm] {\footnotesize
abelian};
\draw[line width=1.2pt] [decorate,decoration={brace,amplitude=10pt,mirror,raise=4pt},yshift=0pt]
(10.5,4.1) -- (10.5,7) node [black,midway,xshift=1.5cm] {\footnotesize
neutral};

\end{tikzpicture}
}
\end{picture}
\begin{caption}
  {\sc $\Con(\m a)$, $\mu$ nonabelian versus $\mu$ abelian} \label{fig1}
\end{caption}
\end{figure}

In the left figure, the shape of $\Con(\m a)$
is not restricted by the definition
of ``neutrabelian SI'', except
for the fact that neutral algebras
in congruence modular varieties have distributive congruence
lattices. However, in the right figure, where $\nu>0$,
the shape of $\Con(\m a)$ is restricted.
This lattice is articulated at $\nu$.
We have indicated that $\m a$ is neutral above $\nu$ and abelian
below $\nu$, but in fact more is true:
if $\beta>\nu$ and $\alpha$ is arbitrary, we have
$[\alpha,\beta]=\alpha\wedge\beta$.

  The articulation at $\nu$
  may seem to be a strange condition to put on $\Con(\m a)$,
but it is a shape that occurs in nature.
The fact that any SI unital ring in a
residually small variety is a neutrabelian SI follows from
\cite{mckenzie}.
The fact that any
SI algebra in a finitely generated, finitely decidable variety
is a neutrabelian SI follows from \cite{idziak}.
Any dualizable,
endorigid, SI algebra in a congruence permutable variety
is a neutrabelian SI, as can be shown using ideas in \cite{idziak0}.

\begin{df} \label{neutrabelian}
  Let $\m a$ be an algebra
  in a congruence modular variety. $\m a$ is 
  {\bf neutrabelian} if every SI quotient of $\m a$
  is a neutrabelian SI according to Definition~\ref{neutrabelianSI}.
\end{df}

One must now ask whether this definition is consistent
with Definition~\ref{neutrabelianSI}. Namely, is it true that if
$\m a$ is a neutrabelian SI according to
Definition~\ref{neutrabelianSI}, then
must $\m a$ be 
neutrabelian according to
Definition~\ref{neutrabelian}?
To establish an affirmative answer to this question
one must verify that any SI
quotient $\m a/\delta$ of a neutrabelian SI $\m a$
is also a neutrabelian SI. Argue as follows:
suppose that $\m a$ is a neutrabelian SI and $\nu$
is the centralizer of the monolith of $\m a$.
If $\delta\geq \nu$, then $\m a/\delta$ is neutral
and SI, hence a neutrabelian SI.
If $\delta < \nu$, then it follows from
conditions ``(C4)'' and ``(C3)'' of
Definition~\ref{neutrabelianSI} and properties
of the modular commutator that $\nu/\delta$
is the centralizer of the monolith of $\m a/\delta$
and conditions ``(C4)'' and ``(C3)'' of
Definition~\ref{neutrabelianSI} hold for
$\m a/\delta$.

Next we describe the second commutator
property that is the focus of this paper, the
split centralizer condition.
Let $\m a$ be a finite algebra and let $\mathcal Q$
be a quasivariety containing $\m a$.
A {\bf $\mathcal Q$-congruence}
on $\m a$ is a congruence $\kappa$ of $\m a$
such that $\m a/\kappa\in \mathcal Q$.
Let $\delta$ be a completely meet irreducible congruence on $\m a$
with upper cover $\theta$, and let $\nu = (\delta:\theta)$
be the centralizer of $\theta$ modulo $\delta$.
A triple $(\delta,\theta,\nu)$ defined in this way
is called {\bf relevant} if $\theta\leq \nu$, that is,
if $\theta/\delta$ is abelian.
The triple $(\delta,\theta,\nu)$
is {\bf split} by a triple of congruences $(\alpha,\beta,\kappa)$ $\m a$ if
\begin{enumerate}
\item[(i)] $\kappa$ is a $\mathcal Q$-congruence on $\m a$,
\item[(ii)] $\beta\leq \delta$,
\item[(iii)] $\alpha\wedge\beta = \kappa$,
\item[(iv)] $\alpha\vee\beta = \nu$, and
\item[(v)] 
$\alpha/\kappa$ is abelian.
\end{enumerate}
The relationships between 
these congruences are
depicted in Figure~\ref{fig2}.
\begin{figure}[!htbp]
\setlength{\unitlength}{1truemm}
\begin{picture}(90,62)
\put(10,0){%
\begin{tikzpicture}[scale=.65]
\draw[line width=1.2pt] (-.07,-2.93) -- (-3,0) -- (-1.4,1.6);
\draw[line width=1.2pt] (.07,-2.93) -- (3,0) -- (0,3);
\draw[line width=1.2pt] plot [smooth,tension=1]
               coordinates{(-1.4,1.6) (-.9,2.5) (0,3)};
\draw[line width=1.2pt] plot [smooth,tension=1]
               coordinates{(-1.4,1.6) (-.5,2.1) (0,3)};
\draw[line width=1.2pt] plot [smooth,tension=1]
               coordinates{(-1.4,1.6) (-1.18,3.08) (0,4)};
\draw[line width=1.2pt] plot [smooth,tension=1]
               coordinates{(-1.4,1.6) (.2,2.5) (0,4)};
\draw (0,-3) circle [radius=0.1];    
\draw [fill] (-3,0) circle [radius=0.1];    
\draw [fill] (-1.8,1.2) circle [radius=0.1]; 
\draw [fill] (-1.4,1.6) circle [radius=0.1];   
\draw [fill] (0,3) circle [radius=0.1];   
\draw [fill] (3,0) circle [radius=0.1];
\node at (.4,-3){\small $\kappa$};
\node at (-3.4,0){\small $\beta$};
\node at (-2.2,1.25){\small $\delta$};
\node at (-1.8,1.65){\small $\theta$};
\node at (0,3.3){\small $\nu$};
\node at (3.4,0){\small $\alpha$};
%
\draw[line width=1.2pt] plot [smooth,tension=1.5]
               coordinates{(0,-4) (-4,0) (0,4)};
\draw[line width=1.2pt] plot [smooth,tension=1.5]
               coordinates{(0,-4) (4,0) (0,4)};
\draw [fill] (0,4) circle [radius=0.1];
\draw [fill] (0,-4) circle [radius=0.1];
\node at (0,4.5){$1$};
\node at (0,-4.5){$0$};
%
\node at (-4,3.5){$\Con(\m a)$};
\node at (7,3){$\nu=(\delta:\theta)$};
\node at (6.5,.5){$\beta\leq\delta$};
\node at (7,-.5){$\alpha\wedge\beta=\kappa$};
\node at (7,-1.5){$\alpha\vee\beta=\nu$};
\node at (7,-2.5){$[\alpha,\alpha]\leq\kappa$};
\end{tikzpicture}
}
\end{picture}
\caption{}\label{fig2}
\end{figure}
We say that a relevant triple $(\delta,\theta,\nu)$
is {\bf split at $0$} if it is split by a triple
of the form $(\alpha,\beta,\kappa)$ satisfying the above conditions
with $\kappa=0$.

In \cite{kearnes-szendrei}, the
{\bf split centralizer condition} 
for a finite algebra
$\m a$ was defined
to be the condition that, for $\mathcal Q := \Sub\Prod(\m a)$
and for any subalgebra $\m b\leq\m a$, each
relevant triple $(\delta,\theta,\nu)$ of $\m b$
is split (relative to $\mathcal Q$) by
some triple $(\alpha,\beta,\kappa)$.
Here we shall consider a modified version
of this condition, which allows us to
ignore the role of the quasivariety
$\mathcal Q$. Namely, we shall only consider
the situation where relevant triples $(\delta,\theta,\nu)$
are split at $0$. Since $0$ is a $\mathcal Q$-congruence
for any $\mathcal Q$ containing
$\m a$, we will be able to ignore the role of $\mathcal Q$.
To make our assumptions explicit,
we record this as:

\begin{df} \label{funnysplit}
  Say that $\m a$ has {\bf centralizers split at $0$}
  if every relevant triple $(\delta,\theta,\nu)$ of $\m a$
  is split by a triple of the form
  $(\alpha,\beta,0)$.
\end{df}

If $\m a_A$ is the expansion of $\m a$
by constants, then $\m a_A$
satisfies the split centralizer condition
as defined in \cite{kearnes-szendrei} if and only if
$\m a_A$ has centralizers split at $0$ as defined
in Definition~\ref{funnysplit}.
But without expanding $\m a$ by constants
we do not have equivalence.
We have only that if $\m a$ and all subalgebras of $\m a$ have
centralizers split at $0$, then
$\m a$ satisfies the split centralizer condition
as defined in \cite{kearnes-szendrei}, but in the
strong sense that the $\mathcal Q$-congruence
$\kappa$ that arises in a splitting
triple $(\alpha,\beta,\kappa)$
must be the zero congruence.

Our goal in this paper is to
prove the following theorem.

\begin{thm}\label{split_cent_thm}
The following are equivalent for a finite algebra
$\m a$ 
in a congruence modular
variety.
\begin{enumerate}
\item[(1)]
$\m a$ is neutrabelian.
\item[(2)]
  $\m a$ has centralizers split at $0$.
\end{enumerate}  
\end{thm}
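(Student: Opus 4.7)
I will prove the two implications separately. Direction $(2)\Rightarrow(1)$ analyzes each SI quotient of $\m a$, splitting on whether its monolith is abelian, and uses the splittings to either verify the neutrabelian SI axioms or derive contradictions. Direction $(1)\Rightarrow(2)$ is an explicit construction of the splitting triple from the neutrabelian structure.

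For $(1)\Rightarrow(2)$: Fix a relevant triple $(\delta,\theta,\nu)$ of $\m a$. The neutrabelian SI $\m a/\delta$ has abelian monolith, so axiom ``(C4)'' of Definition~\ref{neutrabelianSI} applied to $(\nu/\delta,\nu/\delta)$ gives $[\nu,\nu]\leq\delta$. The crucial observation is that \emph{any} $\alpha\in\Con(\m a)$ satisfying $\alpha\leq\nu$, $\alpha\vee\delta=\nu$, and $\alpha\wedge\delta=0$ is automatically abelian, because $[\alpha,\alpha]\leq\alpha\wedge[\nu,\nu]\leq\alpha\wedge\delta=0$; and then $(\alpha,\delta,0)$ splits the given triple at $0$. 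Hence the task reduces to producing a lattice-complement of $\delta$ in the interval $[0,\nu]$ of $\Con(\m a)$. My plan is to take an irredundant subdirect representation $\m a\hookrightarrow\prod_{i\in I}\m a/\delta_i$ via the c.m.i.\ congruences with $\delta=\delta_{i_0}$, set $\rho:=\bigwedge_{i\neq i_0}\delta_i$ (so $\rho\wedge\delta=0$ and $\rho\not\leq\delta$), and let $\alpha:=\rho\wedge\nu$. By modularity $(\rho\wedge\nu)\vee\delta=\nu\wedge(\rho\vee\delta)$, so verifying $\alpha\vee\delta=\nu$ reduces to $\rho\vee\delta\geq\nu$. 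Showing this last inequality is the main obstacle: the image $\rho/\delta$ in $\m a/\delta$ is a nonzero congruence, and by the articulation of $\Con(\m a/\delta)$ at $\nu/\delta$ (part of the neutrabelian SI structure) is comparable with $\nu/\delta$. If $\rho/\delta\geq\nu/\delta$ we are done; the opposite case requires refining the choice of irredundant decomposition or enlarging $\rho$ within $\m a$ while maintaining $\rho\wedge\delta=0$, exploiting the neutrality of $\Con(\m a/\delta)$ above $\nu/\delta$ to arrange that the image of $\rho$ exhausts $\nu/\delta$.

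For $(2)\Rightarrow(1)$: Fix a c.m.i.\ congruence $\delta$ of $\m a$ with upper cover $\theta$ and $\nu:=(\delta:\theta)$; I must show $\m a/\delta$ is a neutrabelian SI. When $\theta/\delta$ is abelian, $(\delta,\theta,\nu)$ is relevant; the splitting $(\alpha,\beta,0)$ yields $\alpha\vee\delta=\nu$ (because $\beta\leq\delta$), so $(\alpha\vee\delta)/\delta=\nu/\delta$, and the abelianness of $\alpha$ in $\m a$ transfers to the abelianness of $\nu/\delta$ in $\m a/\delta$, giving clause ``(C4)''. When $\theta/\delta$ is nonabelian, I claim $\m a/\delta$ is neutral. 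Otherwise a violation of neutrality in $\m a/\delta$ produces a prime quotient with abelian upper cover, which lifts to a relevant triple $(\delta'',\theta'',\nu'')$ of $\m a$ with $\delta''\geq\delta$; splitting at $0$ yields an abelian $\alpha''\in\Con(\m a)$ with $\alpha''\vee\beta''=\nu''\geq\theta''>\delta''\geq\delta$ and $\beta''\leq\delta''$, forcing $\alpha''\not\leq\delta$. Then $\alpha''/\delta$ is a nonzero abelian congruence of the SI $\m a/\delta$, which contradicts the nonabelianness of its monolith (any nonzero congruence of an SI must contain the monolith). What remains—showing in the abelian-monolith case that $\nu/\delta$ is comparable with every congruence of $\m a/\delta$ and that clause ``(C3)'' holds outside $[0,\nu/\delta]$—is the main obstacle in this direction. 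My plan is to apply split at $0$ to suitable further relevant triples above $\delta$: a hypothetical $\gamma/\delta$ incomparable with $\nu/\delta$, or any ``(C3)'' failure outside $[0,\nu/\delta]$, should produce an auxiliary relevant triple whose splitting at $0$ conflicts with the splitting already obtained for $(\delta,\theta,\nu)$.
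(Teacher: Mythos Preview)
Your proposal identifies the right shape of both implications but leaves the hardest steps as acknowledged ``main obstacles'', and in $(1)\Rightarrow(2)$ you have made a choice that makes the obstacle harder than it needs to be.

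For $(1)\Rightarrow(2)$, insisting on $\beta=\delta$ forces you to produce a complement of $\delta$ in the interval $[0,\nu]$, and your subdirect-decomposition candidate $\alpha=(\bigwedge_{i\neq i_0}\delta_i)\wedge\nu$ need not work: nothing about an irredundant representation guarantees that $(\bigwedge_{i\neq i_0}\delta_i)\vee\delta$ reaches $\nu$, and ``refining the decomposition or enlarging $\rho$'' is a hope, not an argument. The paper sidesteps this by taking $\beta=[\nu,\nu]$ rather than $\delta$. Since $[\nu,\nu]\leq\delta$ this $\beta$ is legal, and the interval $I[[\nu,\nu],\nu]$ is abelian, so $\nu=\rho([\nu,\nu])=[\nu,\nu]\vee\rho$ where $\rho$ is the solvable radical of $\m a$ (the identity $\rho(\tau)=\rho\vee\tau$ is established first, as a consequence of neutrabelianity). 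Thus one may start with $\alpha=\rho$, which already satisfies $\alpha\vee\beta=\nu$, and then shrink $\alpha$ inside $[0,\rho]$ so as to minimize $\alpha\wedge\beta$; the real content is a modular-lattice calculation (in the sublattice generated by $\alpha,\beta,\psi$ for a well-chosen completely meet irreducible $\psi$) showing this minimum must be $0$. Your setup never reaches a position where such a minimization argument can begin.

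For $(2)\Rightarrow(1)$, your nonabelian-monolith argument is correct and is essentially the paper's transfer-principle step, and your derivation that $\nu/\delta$ is abelian is fine. But the comparability of $\nu/\delta$ with every congruence of $\m a/\delta$ is the crux, and ``produce an auxiliary relevant triple whose splitting conflicts with the one for $(\delta,\theta,\nu)$'' is not yet an argument. The paper first proves a sharp dichotomy: for \emph{any} completely meet irreducible $\tau$ and any splitting $(\alpha,\beta,0)$ of any relevant triple $(\delta,\theta,\nu)$, either $\tau\geq\alpha$ or $\tau\geq[\nu,\nu]$, and in the latter case $\tau\vee\rho\geq\nu$. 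This comes from a perspectivity argument when $\tau\not\geq\alpha$, which forces both $\alpha$ and $\beta$---hence $\nu$---into the centralizer $(\tau:\mu)$, after which (C1)${}^*$ gives $[\nu,\nu]\leq\tau$. Comparability of $\rho(\tau)/\tau$ in $\m a/\tau$ then follows by taking a minimal incomparable $\psi\geq\tau$, extending it to a completely meet irreducible $\delta\geq\psi$, and applying the dichotomy with this $\tau$ against the resulting relevant triple. Your sketch never isolates this dichotomy, and without it the comparability and ``(C3)'' verifications do not go through.
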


Property (2) of this theorem is a statement that
certain congruences, namely centralizers of abelian prime
quotients, have a special kind of ``direct factorization''.
Property (1) of this theorem is a statement
about the commutator structure of the subdirect factors of $\m a$.
When judging whether a theorem of this sort
is interesting one might reflect on whether
it is easier to verify if special direct factorizations
exist or it is easier
to verify if subdirect factors have given
commutator properties.
Our motivation for proving Theorem~\ref{split_cent_thm}
was the feeling that it is hard
to establish direct decompositions of congruences,
and so Property (1) is more illuminating than Property (2).

The relevance of Theorem~\ref{split_cent_thm}
for natural duality theory is the following
corollary.

\begin{cor}\label{split_cent_cor}
  If $\m a$ is a finite algebra with
  a cube term, and every subalgebra of $\m a$
  is neutrabelian, then $\m a$ is dualizable.
\end{cor}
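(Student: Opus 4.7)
The plan is to deduce this corollary by chaining Theorem~\ref{split_cent_thm} with the main dualizability theorem of \cite{kearnes-szendrei}, which asserts that a finite algebra with a cube term satisfying the split centralizer condition is dualizable. So the task reduces to producing, from the hypothesis, a verification of the split centralizer condition.

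First I would observe that the existence of a cube term implies congruence modularity, so $\m a$ and each of its subalgebras live in a congruence modular variety. This is exactly the ambient assumption needed in order to invoke Theorem~\ref{split_cent_thm}. Applying the $(1)\Rightarrow(2)$ direction of that theorem to each finite subalgebra $\m b \leq \m a$ (which is neutrabelian by hypothesis) yields that every subalgebra of $\m a$, including $\m a$ itself, has centralizers split at $0$ in the sense of Definition~\ref{funnysplit}.

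Next I would invoke the remark recorded immediately after Definition~\ref{funnysplit}: if $\m a$ and each of its subalgebras have centralizers split at $0$, then $\m a$ satisfies the split centralizer condition of \cite{kearnes-szendrei} relative to $\mathcal Q := \Sub\Prod(\m a)$, and in fact in the strong form where the $\mathcal Q$-congruence component of every splitting triple may be taken to be $0$ (noting that the zero congruence is always a $\mathcal Q$-congruence). Feeding this into the dualizability theorem of \cite{kearnes-szendrei} delivers the conclusion that $\m a$ is dualizable.

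The proof is essentially bookkeeping; the substantive content lives in Theorem~\ref{split_cent_thm} and in the dualizability theorem of \cite{kearnes-szendrei}. The only mildly delicate point is the reduction from ``centralizers split at $0$ for $\m a$ and all its subalgebras'' to the (unmodified) split centralizer condition of \cite{kearnes-szendrei}, but this is precisely the passage the authors have already spelled out in the paragraph following Definition~\ref{funnysplit}, so there is no genuine obstacle.
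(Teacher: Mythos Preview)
Your proposal is correct and follows essentially the same route as the paper's proof: neutrabelian subalgebras yield centralizers split at $0$ via Theorem~\ref{split_cent_thm}, hence the split centralizer condition of \cite{kearnes-szendrei}, and then dualizability follows from Theorem~1.1 of \cite{kearnes-szendrei}. The paper's version is terser (it compresses your first two steps into a single sentence), and your added remark that a cube term forces congruence modularity makes explicit an assumption the paper leaves implicit.
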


\begin{proof}
If every subalgebra of $\m a$
is neutrabelian, then $\m a$ satisfies the split centralizer condition
as defined in \cite{kearnes-szendrei}. In
Theorem~1.1 of \cite{kearnes-szendrei} it is proved
that any finite algebra with a cube term which
satisfies the split centralizer condition is dualizable.
\end{proof}

This yields the first proof of the following fact.

\begin{cor}
  If $\m a$ is a finite algebra in a finitely decidable
congruence modular variety, then $\m a$ is dualizable.
\end{cor}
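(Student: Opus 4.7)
The plan is to reduce this corollary to Corollary~\ref{split_cent_cor}. To apply that corollary to a finite algebra $\m a$ in a finitely decidable congruence modular variety $\mathcal V$, I need to verify two things: (i) $\m a$ has a cube term, and (ii) every subalgebra of $\m a$ is neutrabelian.

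For (ii), I would first pass to $\mathcal V(\m a)$, the subvariety generated by $\m a$. Since $\m a$ is finite this is a finitely generated subvariety, and since every finite member of $\mathcal V(\m a)$ is a finite member of $\mathcal V$, the variety $\mathcal V(\m a)$ inherits finite decidability from $\mathcal V$. It is of course still congruence modular. The remark made in the excerpt, citing Idziak's \cite{idziak}, tells us that every SI algebra in a finitely generated, finitely decidable congruence modular variety is a neutrabelian SI. Now let $\m b\le\m a$ be any subalgebra. Every SI quotient of $\m b$ belongs to $\mathcal V(\m a)$, hence is a neutrabelian SI, so by Definition~\ref{neutrabelian} the algebra $\m b$ is neutrabelian; this gives (ii).

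For (i), I would appeal to the structural description of finitely decidable congruence modular varieties, also due to Idziak: such varieties decompose (up to term equivalence) as a varietal product of affine varieties over finite rings and of discriminator-type components, each of which is congruence permutable. A varietal product of congruence permutable varieties is congruence permutable, so $\mathcal V(\m a)$ admits a Mal'cev term, and a Mal'cev term is in particular a cube term. This gives (i).

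With (i) and (ii) in hand, Corollary~\ref{split_cent_cor} applies and yields that $\m a$ is dualizable. The main obstacle is really bookkeeping rather than hard new work: one must be comfortable citing Idziak's two results (the commutator-theoretic one, which is already quoted in the paper, and the structural one that furnishes the cube term) and must verify that finite decidability descends to the finitely generated subvariety $\mathcal V(\m a)$; the rest is a direct appeal to Corollary~\ref{split_cent_cor}.
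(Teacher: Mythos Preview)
Your approach is essentially the paper's: invoke Idziak's results to obtain both a Mal'cev (hence cube) term and the neutrabelian property, then apply Corollary~\ref{split_cent_cor}. The paper does this in one stroke, citing the Main Theorem of \cite{idziak} directly to conclude that any finitely decidable congruence modular variety $\mathcal V$ already has a Mal'cev term and that \emph{every} finite algebra in $\mathcal V$ is neutrabelian---no passage to $\mathcal V(\m a)$ is needed, and no separate structural decomposition is invoked for the cube term.

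Your detour through $\mathcal V(\m a)$ is harmless but unnecessary, and the justification you give for its finite decidability is not valid as written: the inclusion of finite members $\mathrm{Fin}(\mathcal V(\m a))\subseteq\mathrm{Fin}(\mathcal V)$ only yields $\mathrm{Th}(\mathrm{Fin}(\mathcal V))\subseteq\mathrm{Th}(\mathrm{Fin}(\mathcal V(\m a)))$, and decidability of a subtheory does not imply decidability of a supertheory. (The conclusion is in fact true here---the structural conditions in Idziak's characterization are inherited by subvarieties---but that is a different argument.) Since the Main Theorem of \cite{idziak} applies to $\mathcal V$ itself without any finitely-generated hypothesis, you can simply drop the detour.
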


\begin{proof}
  It follows from the Main Theorem of
  \cite{idziak} that if $\mathcal V$ is a finitely decidable
  congruence modular variety, then $\mathcal V$
  has a Maltsev term and every finite algebra in $\mathcal V$
  is neutrabelian.
  Since Maltsev terms are special types of cube terms,
  this corollary follows from Corollary~\ref{split_cent_cor} above.
\end{proof}

\section{The equivalence of two conditions}\label{equivalence}

Recall that the commutator identity (C1) holds for $\m a$ 
exactly when, for every $\alpha, \beta\in\Con(\m a)$, it is the case that

\begin{align*}
[\alpha\wedge \beta,\beta] &= \alpha\wedge [\beta,\beta].\tag{C1} \\  
\end{align*}

\noindent
Consider the following related condition
for an SI, $\m s$, with monolith $\mu$:
say that $\m s$ satisfies (C1)${}^*$ if the
centralizer of its monolith, $(0:\mu)$,
is abelian. It is proved in the opening
paragraphs of Section 2 of
\cite{freese-mckenzie0} that if $\m a$ belongs
to a congruence modular variety, then 
$\m a$ satisfies (C1) if and only if every SI quotient
of $\m a$ satisfies (C1)${}^*$. 

Recall also that the {\bf solvable radical} of a congruence
$\tau\in\Con(\m a)$ is the largest congruence
$\rho(\tau)$ for which the interval $I[\tau,\rho(\tau)]$
is solvable, if a largest such congruence exists.
The {\bf solvable radical} of the algebra
$\m a$ is $\rho:=\rho(0)$, the largest
solvable congruence, if such exists.
It follows from the properties of the modular commutator
that if $\tau \leq \alpha, \beta$ and both $\alpha$
and $\beta$ are solvable over $\tau$,
then $\alpha\vee \beta$ is solvable over $\tau$.
Thus, if $\Con(\m a)$ is finite, the join
all congruences solvable over $\tau$ equals $\rho(\tau)$.

We can use (C1) and the concept of the solvable radical to
express the neutrabelian property for SIs
in a slightly simpler way.

\begin{lm} \label{neut_char}
  Let $\m a$ be a finite SI algebra in a congruence modular variety.
  $\m a$ is neutrabelian if and only if
  \begin{enumerate}
\item[(1)]  $\m a$ satisfies (C1).
\item[(2)]  The solvable radical $\rho$ is comparable with all
  congruences of $\m a$.
\item[(3)]  $\rho$ is abelian.
\item[(4)]  $\m a/\rho$ is neutral.
\end{enumerate}
\end{lm}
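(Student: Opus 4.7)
The plan is to prove both directions by identifying the centralizer $\nu := (0:\mu)$ with the solvable radical $\rho$, after which the remaining conditions can be read off with a little help from (C1).

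For the forward direction, assume $\m a$ is a neutrabelian SI. First I would establish $\nu = \rho$: the (C4) clause of Definition~\ref{neutrabelianSI} gives $[\nu,\nu]=0$, so $\nu$ is abelian and $\nu \leq \rho$; conversely, any $\alpha > \nu$ satisfies $[\alpha,\alpha]=\alpha$ by the (C3) clause, so no congruence strictly above $\nu$ is solvable, and comparability of $\nu$ with all congruences forces $\rho \leq \nu$. From $\nu=\rho$, conditions (2) and (3) are immediate, and (4) follows because (C3) applied to any $\alpha,\beta \geq \rho$ not both equal to $\rho$ gives $[\alpha,\beta]=\alpha\wedge\beta \geq \rho$, which descends to neutrality in $\m a/\rho$. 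Condition (1) admits a short case analysis using (C3) and (C4): for $\beta \leq \nu$ both sides of (C1) vanish by (C4); for $\beta > \nu$ we have $[\beta,\beta]=\beta$ by (C3), and (C3) also gives $[\alpha\wedge\beta,\beta]=\alpha\wedge\beta$, so both sides of (C1) equal $\alpha\wedge\beta$.

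For the backward direction, assume (1)--(4). I again begin by showing $\nu = \rho$. Since $\m a$ is an SI satisfying (C1), the quoted characterization gives (C1)${}^*$, i.e., $\nu$ is abelian, so $\nu \leq \rho$. Conversely, $\rho$ is abelian by (3); if $\mu \leq \rho$ then $[\rho,\mu] \leq [\rho,\rho]=0$, so $\rho \leq \nu$, while if $\mu \not\leq \rho$ then comparability (2) and the fact that $\mu$ is an atom force $\rho = 0 \leq \nu$. The comparability clause of ``neutrabelian SI'' is then just (2), and the (C4) clause is immediate from $[\rho,\rho]=0$.

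The main work is verifying the (C3) clause when $\alpha \not\leq \nu$, i.e., $\alpha > \rho$ by comparability. The key sublemma I would isolate is that $[\alpha,\alpha]=\alpha$ for every such $\alpha$: neutrality of $\m a/\rho$ (condition (4)) gives $[\alpha,\alpha]\vee\rho=\alpha$, and comparability (2) applied to $[\alpha,\alpha]$ rules out $[\alpha,\alpha]\leq\rho$ (which would force $[\alpha,\alpha]\vee\rho=\rho<\alpha$), so $[\alpha,\alpha]\geq\rho$ and hence equals $\alpha$. Given the sublemma, (C3) splits into two subcases via (C1): when $\beta \leq \rho$ we have $\beta\wedge\alpha=\beta$, so (C1) applied with arguments $(\beta,\alpha)$ yields $[\beta,\alpha]=\beta\wedge[\alpha,\alpha]=\beta$; when $\beta > \rho$, the sublemma applied to $\beta$ gives $[\beta,\beta]=\beta$, and (C1) applied with $(\alpha,\beta)$ gives $[\alpha\wedge\beta,\beta]=\alpha\wedge\beta$, which by monotonicity of the commutator sandwiches $[\alpha,\beta]$ into equality with $\alpha\wedge\beta$. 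The principal obstacle is precisely this sublemma: without comparability (2), neutrality of $\m a/\rho$ alone only delivers $[\alpha,\alpha]\vee\rho=\alpha$, which is strictly weaker than $[\alpha,\alpha]=\alpha$, and the rest of (C3) would collapse.
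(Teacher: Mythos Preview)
Your proof is correct and follows essentially the same route as the paper: identify $\nu = \rho$ in both directions, then verify the (C3) clause via (C1) together with the fact that $[\beta,\beta]=\beta$ whenever $\beta > \rho$. Your explicit use of comparability (2) to derive this last equality is actually more careful than the paper, which simply asserts ``$[\beta,\beta]=\beta$ when $\rho<\beta$'' without spelling out why $\rho \leq [\beta,\beta]$.
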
  

\begin{proof}
For both directions of the argument
assume that $\m a$ is a finite SI with monolith $\mu$ and centralizer
$\nu=(0:\mu)$.

Now assume that $\m a$ is neutrabelian.
For any two congruences $\alpha,\beta\in\Con(\m a)$
it is the case that 
$[\alpha\wedge\beta,\beta]\leq \alpha\wedge[\beta,\beta]$.
If the inequality is strict, then $0\neq [\beta,\beta]$,
so $\nu < \beta$ by ``(C4)''. But then, by ``(C3)'',
we get
$[\alpha\wedge\beta,\beta]=(\alpha\wedge\beta)\wedge\beta$
and  $\alpha\wedge[\beta,\beta] = \alpha\wedge\beta$,
and these are equal, so the inequality cannot be strict
after all. This means that (C1) holds.

In a neutrabelian SI, $\nu$ is abelian,
and $\m a/\nu$ is neutral. This is enough to
imply that $\nu = \rho$ = the solvable radical
of $\m a$. Hence (2), (3), (4) hold
in a neutrabelian SI by Definition~\ref{neutrabelianSI}.

Now we prove the reverse direction.
Assume that $\m a$ is SI
and satisfies (1)--(4). Since $\m a$ satisfies (C1),
it satisfies (C1)${}^*$, so $\nu$ is abelian.
This yields $\nu\leq \rho$. On the other hand,
$\rho$ is abelian (as we assume in (3)),
so $[\rho,\mu]=0$, either
because $\rho=0$ or because $\mu\leq \rho$.
Hence $\rho\leq\nu$. This yields $\rho=\nu$.
Hence (2) and (3) imply that $\nu$
is abelian and is comparable with all congruences.
What remains to show is that $[\alpha,\beta]=\alpha\wedge \beta$
if $\nu < \beta$.

If $\nu<\alpha\wedge\beta$, then we get 
$[\alpha,\beta]=\alpha\wedge \beta$ from the neutrality of
$\m a/\nu=\m a/\rho$ (Item~(4)). So the only situation left
to check is when $\alpha\wedge\beta\leq \nu=\rho < \beta$.
In this situation we have
\begin{align*}
[\alpha\wedge\beta,\beta] \stackrel{\rm(C1)}{=} \alpha\wedge[\beta,\beta] =
\alpha\wedge\beta,   \tag{$\dagger$}  
\end{align*}
since $[\beta,\beta]=\beta$ when $\rho<\beta$. But then
\begin{align*}
[\alpha\wedge\beta,\beta] \leq [\alpha,\beta]\leq
\alpha\wedge\beta\stackrel{(\dagger)}{=}
[\alpha\wedge\beta,\beta],\tag{$\ddagger$}  
\end{align*}
so we have equality in the middle of $(\ddagger)$,
$[\alpha,\beta]=\alpha\wedge\beta$.
\end{proof}

The characterization given by Lemma~\ref{neut_char}
is very close to the definition of
``neutrabelian SI'', so let us take a moment to emphasize the difference.
In the definition, we specify that $[\alpha,\beta]=\alpha\wedge \beta$
if at least one of $\alpha, \beta$ is strictly above $(0:\mu)$.
Hence to verify the definition holds for some algebra we may have to examine
values of $[\alpha,\beta]$ in some cases where
$\alpha < (0:\mu) < \beta$. The lemma allows us to avoid
examining these values. Given (C1) and the comparability
of $(0:\mu)$ with all congruences, it suffices
to check the values of $[\alpha,\beta]$ when both
$\alpha,\beta$ lie below $(0:\mu)$ or when both
lie above $(0:\mu)$.

\begin{lm} \label{properties}
  Let $\m a$ be a finite algebra in a congruence modular
  variety. Under either one of the hypotheses
\begin{enumerate}
\item[(i)]
    $\m a$ is neutrabelian, or
\item[(ii)]
  $\m a$ has centralizers split at $0$, 
\end{enumerate}
the following conclusions hold:
\begin{enumerate}
\item[(1)] $\m a$ satisfies the commutator identity {\rm{(C1)}}.
\item[(2)] $\m a$ satisfies the
  $\langle {\bf 3}, {\bf 2}\rangle$ and 
  $\langle {\bf 4}, {\bf 2}\rangle$-transfer principles
  of tame congruence theory.  
\item[(3)] The solvable radical $\rho$ is the largest
  abelian congruence of $\m a$, and
  $\m a/\rho$ is neutral.
\item[(4)] If $\tau$ is a congruence of $\m a$, then the
  solvable radical $\rho(\tau)$ of $\tau$ equals $\rho\vee\tau$,
  the interval $I[\tau,\rho(\tau)]$ is abelian,
  and the interval $I[\rho(\tau),1]$ is neutral.
\end{enumerate}
\end{lm}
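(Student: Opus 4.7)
The plan is to establish conclusions (1)--(4) under each of the two hypotheses separately, exploiting the subdirect representation of $\m a$ in case~(i) and the explicit splittings in case~(ii).

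Under hypothesis~(i), my strategy is to apply Lemma~\ref{neut_char} to each SI quotient $\m a/\delta$, which is a neutrabelian SI. Conclusion~(1) then follows from the cited Freese--McKenzie equivalence between (C1) on $\m a$ and (C1)$^*$ on every SI quotient. For~(3), I would verify $[\rho,\rho]\leq\delta$ for each completely meet-irreducible $\delta$ by noting that the image of $\rho$ in $\m a/\delta$ is solvable, hence contained in the solvable radical $\nu_\delta/\delta$ of the neutrabelian SI $\m a/\delta$, which is abelian by Lemma~\ref{neut_char}; taking the meet over all such $\delta$ gives $[\rho,\rho]=0$. To show $\m a/\rho$ is neutral, I would rule out any meet-irreducible $\delta\geq\rho$ whose SI quotient has an abelian monolith; such a configuration would force $\nu_\delta>\delta$ with $\nu_\delta/\delta$ abelian, contradicting the maximality of $\rho$ via a commutator computation invoking (C1) and the fact that $\rho$ is solvable. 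For~(4), I observe that $\m a/\tau$ is itself neutrabelian, so~(3) applied to $\m a/\tau$ yields $\rho(\tau)/\tau$ abelian and $\m a/\rho(\tau)$ neutral; the identification $\rho(\tau)=\rho\vee\tau$ follows because $\rho\vee\tau/\tau$, an image of the abelian $\rho$, is abelian, while the neutrality of $\m a/(\rho\vee\tau)$ (a quotient of the neutral $\m a/\rho$) forbids any abelian congruence strictly above $\rho\vee\tau$ in $\m a/\tau$. Conclusion~(2) then follows from the ``abelian-below, neutral-above'' structure given by~(4): violations of the $\langle{\bf 3},{\bf 2}\rangle$ or $\langle{\bf 4},{\bf 2}\rangle$ transfer principles would require a type-${\bf 2}$ prime quotient above a type-${\bf 3}$ or type-${\bf 4}$ one, which cannot occur since (4) confines type-${\bf 2}$ primes below $\rho(\alpha)$ and type-${\bf 3},{\bf 4}$ primes above.

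Under hypothesis~(ii), I would first derive~(1) directly from the splittings. Given an SI quotient $\m a/\delta$ with abelian monolith $\theta/\delta$, the relevant triple $(\delta,\theta,\nu_\delta)$ splits as $(\alpha,\beta,0)$ with $\alpha$ abelian, $\beta\leq\delta$, and $\alpha\vee\beta=\nu_\delta$. Since $\beta\leq\delta$, we have $\nu_\delta/\delta=(\alpha\vee\delta)/\delta\cong\alpha/(\alpha\wedge\delta)$, a quotient of the abelian $\alpha$, hence abelian. This gives (C1)$^*$ in every SI quotient with abelian monolith, and therefore (C1) for $\m a$. For~(2),~(3), and~(4) under~(ii), the plan is to show that each SI quotient of $\m a$ is a neutrabelian SI, thereby reducing to the hypothesis~(i) argument. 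The direct decomposition $\nu_\delta=\alpha\vee\beta$ with $\alpha\wedge\beta=0$ coming from the splitting provides the structural handle for verifying comparability of $\nu_\delta$ with all congruences of the SI quotient and the (C3) behavior above $\nu_\delta$.

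The main obstacle arises under hypothesis~(ii): showing that the purely local splittings at relevant triples force each SI quotient of $\m a$ to be a neutrabelian SI. In particular, proving the (C3) behavior above $\nu_\delta$ and the comparability of $\nu_\delta$ with all congruences of $\m a/\delta$ is not immediate from the splitting data and requires a delicate analysis of how splittings at different relevant triples interact. A related, though less severe, obstacle under~(i) is the proof that $\m a/\rho$ is neutral, which requires precluding abelian monoliths in SI quotients strictly above $\rho$ via a careful commutator argument linking $\nu_\delta$ back to the maximality of~$\rho$.
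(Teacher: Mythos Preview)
Under hypothesis~(i) your outline is close to the paper's, though you invert the order of (2) and (3)/(4). The paper establishes (2) first, via a reduction claim (any failure of a transfer principle in $\m a$ descends to a $3$-element interval $0\prec\mu\prec\sigma$ in some SI quotient), and then uses (2) together with tame congruence theory to get the neutrality of $\m a/\rho$ in (3). Your shortcut for ``$\m a/\rho$ neutral'' is not really an argument: you assert a contradiction with the maximality of $\rho$ ``via a commutator computation invoking (C1)'', but having an abelian interval $\nu_\delta/\delta$ sitting above $\rho$ does not by itself produce a solvable congruence strictly larger than $\rho$, so something more is needed. The paper's transfer-principle route supplies exactly that missing ingredient.

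The serious gap is under hypothesis~(ii). Your plan for (2), (3), (4) is to show that every SI quotient of $\m a$ is a neutrabelian SI and thereby reduce to case~(i). But ``every SI quotient is a neutrabelian SI'' is precisely the statement that $\m a$ is neutrabelian, i.e.\ the implication (ii)$\Rightarrow$(i) of Theorem~\ref{split_cent_thm}, whose proof in the paper \emph{uses} this lemma (items (1), (2), (4) are all invoked there, and the delicate comparability argument you mention is Claim~\ref{comparable}). So your proposed route is circular. You acknowledge this yourself when you label the comparability of $\nu_\delta$ and the (C3) behavior above it an unresolved ``main obstacle''; that is not a proof plan but a deferral of the entire difficulty.

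The paper avoids this circularity. Under~(ii), after establishing (C1) as you do, it invokes a result of McKenzie: any algebra in a congruence modular variety satisfying (C1) has a largest abelian congruence $\widehat{\alpha}$. A short splitting argument then shows $\m a/\widehat{\alpha}$ is neutral: if some completely meet-irreducible $\delta\geq\widehat{\alpha}$ had an abelian upper cover, split the resulting relevant triple by $(\alpha,\beta,0)$; since $\alpha$ is abelian, $\alpha\leq\widehat{\alpha}\leq\delta$, whence $\nu=\alpha\vee\beta\leq\delta<\nu$, a contradiction. This gives (3) with $\rho=\widehat{\alpha}$; item (4) then follows from (3) by elementary commutator reasoning, and (2) follows from (4) together with the reduction-to-SI claim. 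No appeal to the neutrabelian structure of SI quotients is needed.
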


\begin{proof}
  In the first part of the proof we assume
  (i), which is the assumption
  that $\m a$ is a neutrabelian algebra,
  and we argue that each of $(1)$--$(4)$ hold.

  An algebra is neutrabelian
  if and only if each SI quotient is, and an algebra satisfies
  (C1) if and only if each SI quotient does. 
  Hence, to prove Item (1), it suffices to prove that any
  neutrabelian SI
  satisfies (C1), which we did in Lemma~\ref{neut_char}.
  
Now consider Item (2).
The $\langle {\bf i}, {\bf j}\rangle$-transfer
principle for a finite algebra $\m a$ ``is''
(or is equivalent to) the statement that $\Con(\m a)$
has no $3$-element interval $\alpha\prec\beta\prec\gamma$
with $\typ(\alpha,\beta)={\bf i}$ and
with $\typ(\beta,\gamma)={\bf j}$. We need the following claim.

\begin{clm} \label{transfer}
Assume that $\m a$ is a finite algebra in a congruence modular
  variety. If $\Con(\m a)$ has a
$3$-element interval $\alpha\prec\beta\prec\gamma$
with $\typ(\alpha,\beta)={\bf i}$ and
with $\typ(\beta,\gamma)={\bf j}$, then
some SI quotient of $\m a$ has congruences
$0\prec\mu\prec\sigma$
with $\typ(0,\mu)={\bf i}$ and
with $\typ(\mu,\sigma)={\bf j}$.
\end{clm}

\begin{proof}[Proof of Claim]
This claim is derivable from
Theorem 3.13 of \cite{agliano-kearnes}, where
a similar claim is asserted in the more general
context of congruence semimodular
varieties. Nevertheless, we include a proof here.
Assume that, for some ${\bf i}$ and ${\bf j}$,
$\Con(\m a)$ has some $3$-element interval
$I[\alpha,\gamma]$ with 
$\alpha\stackrel{\bf i}{\prec}\beta\stackrel{\bf j}{\prec}\gamma$.
Choose a congruence $\delta\geq \alpha$
that it is maximal for the property $\delta\not\geq \beta$.
Necessarily $\delta$ is completely meet irreducible
with upper cover $\delta\vee\beta$. Since $\delta\geq\alpha$ and
$\delta\not\geq\beta$,
we have $\delta\wedge\gamma\geq\alpha$
and $\delta\wedge \gamma\not\geq \beta$. Since the
interval $I[\alpha,\gamma]$ contains only $\alpha, \beta, \gamma$,
it follows that
$\delta\wedge\gamma = \alpha=\delta\wedge\beta$.
By modularity it follows that
$\delta\vee\gamma\neq \delta\vee\beta$.
By tame congruence theory
we even get 
$
\delta
\stackrel{\bf i}{\prec}
\delta\vee\beta
\stackrel{\bf j}{\prec}
\delta\vee\gamma
$.
In the SI quotient $\m a/\delta$, with monolith
$\mu = (\delta\vee\beta)/\delta$ and upper cover
$\sigma = (\delta\vee\gamma)/\delta$ this yields
$0\stackrel{\bf i}{\prec} \mu\stackrel{\bf j}{\prec} \sigma$.
\qedsymbdiamond
\end{proof}

\bigskip

Now we complete the proof of Item (2).
The types that can appear in $\Con(\m a)$
when $\m a$ belongs to congruence modular variety
are ${\bf 2}, {\bf 3}, {\bf 4}$,
and $\alpha\stackrel{\bf 2}{\prec} \beta$
if and only if $\beta$ is abelian over $\alpha$.
In a neutrabelian SI, if the monolith is nonabelian
(type ${\bf 3}$ or ${\bf 4}$), then the SI is neutral,
hence all congruence coverings are nonabelian.
This shows that
$0\stackrel{\bf 3}{\prec} \mu\stackrel{\bf 2}{\prec} \sigma$ and
$0\stackrel{\bf 4}{\prec} \mu\stackrel{\bf 2}{\prec} \sigma$
are forbidden in any finite neutrabelian SI.
By Claim~\ref{transfer}, $3$-element intervals
$\alpha\stackrel{\bf 3}{\prec}\beta\stackrel{\bf 2}{\prec}\gamma$ and
$\alpha\stackrel{\bf 4}{\prec}\beta\stackrel{\bf 2}{\prec}\gamma$
are forbidden in any finite neutrabelian algebra.
This is the statement that $\m a$ satisfies the
$\langle {\bf 3}, {\bf 2}\rangle$ and
$\langle {\bf 4}, {\bf 2}\rangle$-transfer principles.

Next we consider Item (3).
The solvable radical is, by definition,
the largest solvable congruence on $\m a$.
According to tame congruence theory,
the solvable radical of $\m a$
equals the largest congruence $\rho\in \Con(\m a)$ such that
the interval $I[0,\rho]$ contains
only abelian types. Since $\m a$ lies in a congruence
modular variety, where the only abelian
type is ${\bf 2}$, this means
that $\rho$ is the largest congruence on
$\m a$ such that $\typ\{0,\rho\}=\{{\bf 2}\}$.
It follows from the 
$\langle {\bf 3}, {\bf 2}\rangle$ and
$\langle {\bf 4}, {\bf 2}\rangle$-transfer principles
(and the fact that $\typ\{\m a\}\subseteq \{{\bf 2},{\bf 3},{\bf 4}\}$)
that $\typ\{\rho,1\}\subseteq \{{\bf 3}, {\bf 4}\}$.
This implies that $\Con(\m a/\rho)$ has no abelian types,
which is equivalent to the statement that 
$\m a/\rho$ is neutral.

Now we argue that $\rho$ is abelian.
Let $\m a\leq \prod \m a_i$ be a representation 
of $\m a$ as a subdirect product of neutrabelian SIs.
If the solvable congruence $\rho$ were not abelian, then its projection
onto some factor would have to be solvable but not abelian.
Neutrabelian SIs have no solvable congruences
that are not abelian, so we are forced to conclude that $\rho$
is abelian. This completes the proof of Item (3).

Now we prove Item (4).
If $\m a$ is neutrabelian, then any quotient 
$\m a/\tau$ is neutrabelian, hence the radical
$\rho^{\m a/\tau} = \rho(\tau)/\tau$ is abelian
and the quotient $\m a/\rho^{\m a/\tau}$ is neutral
by Item (3). This implies that the interval
$I[\tau,\rho(\tau)]$ in $\Con(\m a)$ is abelian
and the interval $I[\rho(\tau),1]$ is neutral in $\Con(\m a)$.
It remains to prove that $\rho(\tau) = \rho\vee\tau$.

Write $\stackrel{s}{\sim}$ for the solvability relation
of tame congruence theory. Since $0\stackrel{s}{\sim} \rho$ 
in $\Con(\m a)$ and $\stackrel{s}{\sim}$ is a congruence
on $\Con(\m a)$ we get $\tau = 0\vee\tau\stackrel{s}{\sim}\rho\vee\tau$.
This forces $\rho\vee\tau\leq \rho(\tau)$, since
$\rho(\tau)$ is the largest congruence related to
$\tau$ by $\stackrel{s}{\sim}$.
In particular, $I[\rho\vee\tau,\rho(\tau)]$
is a solvable interval.
However, $I[\rho\vee\tau,\rho(\tau)]$
is a subinterval of 
$I[\rho,1]$, which by Item (3) is neutral.
Since it is both solvable and neutral it is trivial.
This forces $\rho(\tau) = \rho\vee\tau$.

\bigskip

We have proved the lemma under hypothesis (i).
Now we prove it under hypothesis (ii),
which is the assumption that $\m a$ has centralizers
split at $0$.

For Item (1), it again
suffices to prove that any SI quotient of $\m a$
satisfies (C1)${}^*$. Let $\m a/\delta$ be an SI
quotient of $\m a$, let $\theta, \nu\geq \delta$ in $\Con(\m a)$
be such that $\theta/\delta$ is the monolith of $\m a/\delta$
and $\nu/\delta=(\delta/\delta:\theta/\delta)$
is the centralizer of $\theta/\delta$.
(This forces $\nu=(\delta:\theta)$.)

There is nothing to prove unless the monolith $\theta/\delta$ is abelian,
so we may assume that
$\delta\stackrel{{\bf 2}}{\prec}\theta\leq \nu$.
In this situation $(\delta,\theta,\nu)$ is a relevant triple of
$\m a$, hence must be split at $0$ by $(\alpha,\beta,0)$
where $\beta\leq\delta$, $\alpha\wedge\beta=0$,
$\alpha\vee\beta=\nu$, and $\alpha$ is abelian. Now
\[
  [\nu,\nu]=[\alpha\vee\beta,\alpha\vee\beta]\leq [\alpha,\alpha]\vee\beta
  \leq 0\vee\delta=\delta,
  \]
  which proves that $\nu$ is abelian over $\delta$.
From this it follows that the centralizer
$(\delta/\delta:\theta/\delta)=\nu/\delta$ of the monolith
$\theta/\delta$ in the quotient algebra $\m a/\delta$
is abelian over $\delta/\delta$, i.e. that (C1)${}^*$ holds.

We temporarily skip Item (2) and prove Item (3).
We will use the fact, proved
in Lemma~4.16 of \cite{mckenzie2}
(also appearing as Exercise 8.5 of \cite{freese-mckenzie}),
that if $\m a$ is an algebra in a congruence
modular variety, and $\m a$
satisfies (C1), then $\m a$ has a largest
abelian congruence, which we denote $\widehat{\alpha}$.

\begin{clm} \label{neutral}
$\m a/\widehat{\alpha}$ is neutral.
\end{clm} 

\begin{proof}[Proof of Claim]
If $\m a/\widehat{\alpha}$ had any abelian prime quotient,
then $\m a$ would have one, $\delta\prec\theta$,
where $\delta$ is completely meet irreducible and
$\widehat{\alpha}\leq \delta\prec\theta$.
If $\nu=(\delta:\theta)$, then necessarily
\[
\widehat{\alpha}\leq \delta\prec\theta\leq\nu\]
so $(\delta,\theta,\nu)$ would be a relevant triple.
If this triple is split by $(\alpha,\beta,0)$,
then since $\alpha$ is abelian we must have $\alpha\leq\widehat{\alpha}$.
Since $(\alpha,\beta,0)$ splits $(\delta,\theta,\nu)$
we get
\[
\nu=\alpha\vee\beta\leq \widehat{\alpha}\vee\delta = \delta < \nu,
\]
which is impossible. This proves the claim.
\qedsymbdiamond
\end{proof}

\bigskip

Claim~\ref{neutral} implies that $\rho=\widehat{\alpha}$
(since $\widehat{\alpha}$ is abelian and $\m a/\widehat{\alpha}$
is neutral). Thus Item~(3) has been proved:
$\rho$ is the largest abelian congruence of $\m a$,
and $\m a/\rho$ is neutral.

We easily infer the truth of Item (4), as well.
If $\tau$ is a congruence of $\m a$, then
since $\rho$ is abelian over $0$ we get that 
$\rho\vee\tau$ is abelian over $0\vee\tau = \tau$.
Since $I[\rho\vee\tau,1]$ is a subinterval of
the neutral interval $I[\rho,1]$,
we get that $I[\rho\vee\tau,1]$ is neutral.
Thus the congruence $1$ is neutral over $\rho\vee\tau$,
while $\rho\vee\tau$ is abelian over $\tau$;
this is enough to conclude that $\rho(\tau)=\rho\vee\tau$.

We conclude by proving Item (2).
If $\m a$ fails the $\langle {\bf i}, {\bf j}\rangle$-transfer
principle for ${\bf i}\in\{{\bf 3}, {\bf 4}\}$
and ${\bf j}={\bf 2}$, then by Claim~\ref{transfer}
there is a completely meet irreducible congruence $\delta$
such that
$\delta\stackrel{{\bf 3},{\bf 4}}{\prec}\theta
\stackrel{\bf 2}{\prec}\sigma$.
Now we use the statement in Item~(4) to get a contradiction.
Since $\delta$ is
completely meet irreducible with nonabelian
upper cover we have $\delta = \rho(\delta) = \rho\vee\delta$.
This implies that $\rho\leq \delta$. 
Since $\theta$ has an abelian cover $\sigma$ we have
$\theta < \rho(\theta) = \rho\vee\theta$.
This implies that $\rho\not\leq\theta$.
But the conclusions $\rho\leq\delta\leq \theta$ and
$\rho\not\leq\theta$ are contradictory. This proves Item (2).
\end{proof}

\begin{lm} \label{tau}
  Let $\m a$ be a finite algebra in a congruence modular
  variety. Assume also that $\m a$
  has centralizers split at $0$, and 
  that a particular relevant triple $(\delta,\theta,\nu)$
  is split by a particular triple $(\alpha,\beta,0)$.
  If $\tau\in\Con(\m a)$ is a completely meet irreducible congruence,
  then either
  \begin{itemize}
  \item $\tau\geq\alpha$, or
  \item $\tau\geq [\nu,\nu]$.
  \end{itemize}    
  In the latter case, $\tau\vee\rho\geq\nu$.
\end{lm}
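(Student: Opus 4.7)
The plan is to establish the dichotomy by assuming $\tau\not\geq\alpha$ and deriving $\tau\geq[\nu,\nu]$, and then obtain the final conclusion directly from the hypothesis $\tau\geq[\nu,\nu]$.

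Since $\tau$ is completely meet irreducible, it has a unique upper cover $\tau^+$; the assumption $\tau\not\geq\alpha$ forces $\tau^+\leq\tau\vee\alpha$. Because $\alpha$ is abelian, join-distributivity of the commutator in congruence modular varieties gives $[\tau\vee\alpha,\tau\vee\alpha]\leq\tau\vee[\alpha,\alpha]=\tau$, so the monolith $\tau^+/\tau$ of $\m a/\tau$ is abelian. The heart of the argument is to show that $\nu\leq(\tau:\tau^+)$. Here the splitting condition $\alpha\wedge\beta=0$ is essential: combined with $\tau^+\leq\tau\vee\alpha$ it yields $[\alpha,\tau^+]\leq[\alpha,\tau]\vee[\alpha,\alpha]\leq\tau$ and $[\beta,\tau^+]\leq[\beta,\tau]\vee[\beta,\alpha]\leq\tau$, so both $\alpha$ and $\beta$—and hence their join $\nu$—centralize $\tau^+$ modulo $\tau$. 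Since $\m a$ satisfies (C1) by Lemma~\ref{properties}(1), the SI quotient $\m a/\tau$ satisfies (C1)${}^*$, which says $(\tau:\tau^+)/\tau$ is abelian; therefore $[\nu,\nu]\leq[(\tau:\tau^+),(\tau:\tau^+)]\leq\tau$.

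For the addendum, suppose $\tau\geq[\nu,\nu]$. Then $[\nu\vee\tau,\nu\vee\tau]\leq[\nu,\nu]\vee\tau\leq\tau$ by join-distributivity, so $\nu\vee\tau$ is abelian, and in particular solvable, over $\tau$. Lemma~\ref{properties}(4) identifies the solvable radical of $\tau$ as $\rho\vee\tau$, so $\nu\vee\tau\leq\rho\vee\tau$ and therefore $\nu\leq\rho\vee\tau$.

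The main obstacle is spotting that $(\tau:\tau^+)$ is the right auxiliary congruence, and that $\alpha\wedge\beta=0$ is precisely what is needed to push both halves of the splitting into it. Once this is recognized, everything else reduces to routine commutator arithmetic together with the application of (C1)${}^*$ to $\m a/\tau$ and the solvable-radical formula from Lemma~\ref{properties}(4).
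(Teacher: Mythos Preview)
Your proof is correct and follows the same overall strategy as the paper: establish $\nu\leq(\tau:\tau^{+})$, then apply (C1)${}^{*}$ in $\m a/\tau$ and finish with Lemma~\ref{properties}(4). The only difference is in the execution of the centralizer estimate---the paper first passes to the perspective interval $I[\tau\wedge\alpha,\tau^{+}\wedge\alpha]$ (via a pentagon/modularity argument) and computes $[\alpha,\tau^{+}\wedge\alpha]\leq[\alpha,\alpha]=0$ and $[\beta,\tau^{+}\wedge\alpha]\leq\beta\wedge\alpha=0$ there, whereas you work directly with $\tau^{+}\leq\tau\vee\alpha$ and additivity of the commutator; your route is a bit more streamlined since it avoids invoking that centralizers transfer across perspective intervals.
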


\begin{proof}
Suppose that the upper cover of $\tau$ is $\mu$.
Assume that the first bulleted item does not hold,
  that is that $\tau\not\geq\alpha$. Then $\tau<\tau\vee\alpha$, so
  both $\mu$ and $\alpha$ are dominated by $\tau\vee\alpha$.
  Hence $\mu\vee\alpha\leq \tau\vee\alpha\leq \mu\vee\alpha$, 
yielding $\tau\vee\alpha=\mu\vee\alpha$. If we also had
  $\tau\wedge \alpha = \mu\wedge \alpha$, then $\{\alpha,\tau,\mu\}$
  would generate a pentagon in $\Con(\m a)$, contradicting
  modularity. Thus
  $\tau\wedge \alpha < \mu\wedge \alpha$, and by modularity we even
  have that the interval $I[\tau,\mu]$ is perspective with
  $I[\tau\wedge \alpha,\mu\wedge\alpha]$. This implies that
  $(\tau:\mu)=(\tau\wedge \alpha:\mu\wedge\alpha)$.
  But $\alpha\leq (\tau\wedge\alpha:\mu\wedge\alpha)$, since
  \[
    [\alpha,\mu\wedge\alpha]\leq [\alpha,\alpha]=0\leq \tau\wedge\alpha,
  \]
 and $\beta\leq (\tau\wedge\alpha:\mu\wedge\alpha)$, since
  \[
    [\beta,\mu\wedge\alpha]\leq [\beta,\alpha]\leq \beta\wedge\alpha
    =0\leq \tau\wedge\alpha.
  \]
Thus
$\nu=\alpha\vee\beta \leq (\tau\wedge\alpha:\mu\wedge\alpha)=(\tau:\mu)$.

By (C1)${}^*$ in $\m a/\tau$ we have the second $\leq$ in 
\[
[\nu,\nu]\leq[(\tau:\mu),(\tau:\mu)]\leq \tau.
\]
This shows that when 
$\tau\not\geq\alpha$ we have $\tau\geq [\nu,\nu]$, as claimed.
Moreover, when $\tau\geq [\nu,\nu]$ we calculate that
\[
\tau\vee\rho\geq [\nu,\nu]\vee\rho=\rho([\nu,\nu])\geq \nu,
\]
where the equality holds by Lemma~\ref{properties}~(4).
This justifies the last sentence of the lemma statement.
\end{proof}

Now we are prepared to prove one
direction of Theorem~\ref{split_cent_thm}.

\begin{thm}
  Let $\m a$ be a finite algebra in a congruence modular
  variety. If $\m a$ has centralizers
  split at $0$, then $\m a$ is neutrabelian.
\end{thm}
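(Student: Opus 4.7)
The plan is to show that every completely meet irreducible quotient $\m a/\delta$ is a neutrabelian SI, which suffices by Definition~\ref{neutrabelian}. Fix such a $\delta$ with upper cover $\theta$ and centralizer $\nu=(\delta:\theta)$. If $\theta$ is nonabelian over $\delta$, Lemma~\ref{properties}(4) forces $\rho\leq\delta$, because the abelian interval $I[\delta,\rho\vee\delta]$ cannot contain the nonabelian cover $\theta$; then $\m a/\delta$ is a neutral SI as a quotient of $\m a/\rho$, and it is a neutrabelian SI with $\nu=0$. The remainder of the argument handles the case where $\theta$ is abelian over $\delta$, so that $(\delta,\theta,\nu)$ is a relevant triple; fix a splitting $(\alpha,\beta,0)$.

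The first step in the abelian case is the identification $\nu=\rho\vee\delta$. The inequality $\nu=\alpha\vee\beta\leq\rho\vee\delta$ follows from $\alpha\leq\rho$ (as $\alpha$ is abelian) and $\beta\leq\delta$, while Lemma~\ref{properties}(4) provides $\theta\leq\rho\vee\delta$ and $[\rho\vee\delta,\rho\vee\delta]\leq\delta$, yielding $\rho\vee\delta\leq(\delta:\theta)=\nu$. With this identification, three of the four hypotheses of Lemma~\ref{neut_char} applied to $\m a/\delta$ come essentially for free: (C1) descends from $\m a$ (Lemma~\ref{properties}(1)) to the quotient, the solvable radical $\rho^{\m a/\delta}$ equals $\nu/\delta$ and is abelian, and $\m a/\nu$ is neutral as a quotient of the neutral algebra $\m a/\rho$. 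What remains is the comparability of $\nu/\delta$ with every congruence of $\m a/\delta$; this is the main obstacle.

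I would reduce comparability to the assertion that every completely meet irreducible $\sigma\in\Con(\m a)$ with $\sigma\geq\delta$ and $\sigma\not\geq\nu$ must satisfy $\sigma\leq\nu$. An arbitrary $\gamma\geq\delta$ is the meet of the completely meet irreducibles above it; once the assertion is established, either every contributor is $\geq\nu$ (making $\gamma\geq\nu$) or at least one contributor lies in $[\delta,\nu]$ (making $\gamma\leq\nu$). To prove the assertion for a given $\sigma$, observe that $\sigma\not\geq\nu$ combined with $\beta\leq\delta\leq\sigma$ gives $\sigma\not\geq\alpha$, so Lemma~\ref{tau} supplies $\sigma\vee\rho\geq\nu$. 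This forces $\rho\not\leq\sigma$, and so the unique upper cover $\sigma^+$ lies in the abelian interval $I[\sigma,\rho(\sigma)]=I[\sigma,\rho\vee\sigma]$ and is abelian over $\sigma$.

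To finish I exploit modularity: since $\delta\leq\sigma$, we have $\sigma\wedge\nu=(\sigma\wedge\rho)\vee\delta$ and $\sigma\vee\nu=\sigma\vee\rho$, so the intervals $I[\sigma\wedge\nu,\sigma]$ and $I[\nu,\sigma\vee\nu]$ are perspective. The latter lies in the neutral interval $I[\nu,1]$, so all its covers are of nonabelian type; since projectivity preserves type in congruence modular varieties, every cover in $I[\sigma\wedge\nu,\sigma]$ is nonabelian as well. If $\sigma\not\leq\nu$, then $\sigma\wedge\nu<\sigma$, and the top cover $\sigma^-\prec\sigma$ of any maximal chain from $\sigma\wedge\nu$ to $\sigma$ is nonabelian. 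Together with the abelian cover $\sigma\prec\sigma^+$, this produces a three-element interval $\sigma^-\prec\sigma\prec\sigma^+$ of types $(\mathbf{3}\text{ or }\mathbf{4},\mathbf{2})$, violating the $\langle\mathbf{3},\mathbf{2}\rangle$- or $\langle\mathbf{4},\mathbf{2}\rangle$-transfer principle supplied by Lemma~\ref{properties}(2). This contradiction forces $\sigma\leq\nu$, completing the comparability step and allowing Lemma~\ref{neut_char} to conclude that $\m a/\delta$ is a neutrabelian SI.
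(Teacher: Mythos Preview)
Your overall strategy---reduce to SI quotients, dispose of the nonabelian-monolith case via Lemma~\ref{properties}(4), identify $\nu=\rho\vee\delta$, and verify the hypotheses of Lemma~\ref{neut_char}---is sound and tracks the paper closely. The problem is in the final comparability step.

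First, your invocation of Lemma~\ref{tau} is vacuous. You apply it to the completely meet irreducible $\sigma$ relative to the splitting $(\alpha,\beta,0)$ of $(\delta,\theta,\nu)$, and conclude $\sigma\vee\rho\geq\nu$. But this already follows from $\sigma\geq\delta$ alone, since $\nu=\rho\vee\delta\leq\rho\vee\sigma$; likewise $[\nu,\nu]\leq\delta\leq\sigma$ is automatic. So Lemma~\ref{tau} in this direction yields nothing new. (Your deduction $\rho\not\leq\sigma$ is correct, but it too follows directly: $\rho\leq\sigma$ would give $\nu=\rho\vee\delta\leq\sigma$.)

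Second, and more seriously, the transfer-principle step does not go through. You produce a chain $\sigma^-\stackrel{\mathbf{3},\mathbf{4}}{\prec}\sigma\stackrel{\mathbf{2}}{\prec}\sigma^+$ and assert that this is a three-element interval, but you give no reason why $I[\sigma^-,\sigma^+]$ cannot be the four-element Boolean lattice $M_2$: a complement $\sigma'$ of $\sigma$ in $I[\sigma^-,\sigma^+]$ would satisfy $\sigma^-\stackrel{\mathbf{2}}{\prec}\sigma'\stackrel{\mathbf{3},\mathbf{4}}{\prec}\sigma^+$ by perspectivity, and no contradiction with the transfer principle (as formulated in the paper: ``no three-element interval with those types'') arises. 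The paper avoids this trap by running Lemma~\ref{tau} \emph{the other way}: it builds a new relevant triple above the offending congruence and applies Lemma~\ref{tau} to the \emph{original} completely meet irreducible. In your setup the fix is this: since $\sigma\prec\sigma^+$ is abelian, $(\sigma,\sigma^+,(\sigma{:}\sigma^+))$ is itself a relevant triple, split by some $(\alpha',\beta',0)$; apply Lemma~\ref{tau} to the completely meet irreducible $\delta$ relative to \emph{this} splitting. The alternative $\delta\geq\alpha'$ is impossible (it forces $\sigma\geq\alpha'\vee\beta'=(\sigma{:}\sigma^+)>\sigma$), so $\nu=\delta\vee\rho\geq(\sigma{:}\sigma^+)\geq\sigma$, which is exactly the comparability you need.
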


\begin{proof}
  We must show that if $\m a$ has centralizers split at $0$,
  then every SI quotient of $\m a$ is a neutrabelian SI.
  Let $\tau$ be a completely meet irreducible congruence
  of $\m a$ with upper cover $\mu$.
  We shall argue that $\m a/\tau$ is a neutrabelian SI.

As a first case, assume that
$\m a/\tau$ has nonabelian monolith.
That is, $\tau\stackrel{{\bf 3}, {\bf 4}}{\prec}\mu$.
It follows from
Lemma~\ref{properties}~(2) (transfer principles hold)
that $\m a/\tau$ is neutral, hence neutrabelian,
and we are done in this case.

Next we tackle the case where 
$\tau\stackrel{{\bf 2}}{\prec}\mu$.
It is a consequence of 
Lemma~\ref{properties}~(1)
and the fact that the class of algebras
in congruence modular varieties that 
satisfy (C1) is closed under quotients
that

\begin{clm} \label{C1}
$\m a/\tau$ satisfies {\rm(C1)}. \hfill$\diamond$
\end{clm}  

We also have that

\begin{clm} \label{comparable}
$\rho(\tau)$  is comparable with all congruences in
    the interval $I[\tau, 1]$.
\end{clm}

\begin{proof}[Proof of Claim]
This proof will involve the introduction of several congruences.
The order relationships between these congruences are
indicated in Figure~\ref{fig3}.

Assume there exists a congruence in $I[\tau,1]$
that is not comparable with $\rho(\tau)$.
Choose such 
a congruence $\psi\geq \tau$ that is minimal for the property
that $\psi$ is not comparable with $\rho(\tau)$.
Necessarily any lower cover $\psi_*$ of $\psi$
which lies in $I[\tau,1]$
will be strictly below $\rho(\tau)$,
hence $\psi_*:=\psi\wedge\rho(\tau)$
must be the unique lower cover of $\psi$ that
lies in $I[\tau,1]$.
Choose $\gamma$ so that $\psi_*\prec\gamma\leq \rho(\tau)$.
Since $\tau\leq\psi_*\prec\gamma\leq \rho(\tau)$
is a solvable interval, $\psi_*\stackrel{{\bf 2}}{\prec}\gamma$.

Extend $\psi$ to a congruence $\delta$ maximal for
$\delta\not\geq\gamma$. This congruence will be
completely meet irreducible. Its upper cover will be
$\theta:=\delta\vee \gamma$. Since the interval $I[\delta,\theta]$
is perspective with $I[\psi_*,\gamma]$ we must have
$\delta\stackrel{{\bf 2}}{\prec}\theta$.
Let $\nu = (\delta:\theta)$.
The triple
$(\delta,\theta,\nu)$ is relevant,
and $\m a$ has centralizers split at $0$, so 
assume that $(\delta,\theta,\nu)$ is split by $(\alpha,\beta,0)$.
That is, $\beta\leq\delta$, $\alpha$ is abelian,
$\alpha\wedge\beta = 0$ and $\alpha\vee\beta = \nu$.

\begin{figure}[H]
\setlength{\unitlength}{1truemm}
\begin{picture}(90,70)
\put(13,0){%
\begin{tikzpicture}[scale=.65]
\draw[line width=1.2pt] (0,-5) -- (-4,-1) -- (-1.4,1.6);
\draw[line width=1.2pt] (0,-5) -- (4,-1) -- (0,3);
\draw[line width=1.2pt] plot [smooth,tension=1]
               coordinates{(-1.4,1.6) (-.9,2.5) (0,3)};
\draw[line width=1.2pt] plot [smooth,tension=1]
               coordinates{(-1.4,1.6) (-.5,2.1) (0,3)};
\draw[line width=1.2pt] plot [smooth,tension=1]
               coordinates{(-1.4,1.6) (-1.18,3.08) (0,4)};
\draw[line width=1.2pt] plot [smooth,tension=1]
               coordinates{(-1.4,1.6) (.2,2.5) (0,4)};
\draw [fill] (-4,-1) circle [radius=0.1];    
\draw [fill] (-1.8,1.2) circle [radius=0.1]; 
\draw [fill] (-1.4,1.6) circle [radius=0.1];   
\draw [fill] (0,3) circle [radius=0.1];   
\draw [fill] (4,-1) circle [radius=0.1];
\draw [fill] (1,2) circle [radius=0.1];
\draw [fill] (3.3,-.3) circle [radius=0.1];
\node at (-4.4,-1){\small $\beta$};
\node at (-2.2,1.25){\small $\delta$};
\node at (-1.8,1.65){\small $\theta$};
\node at (0,3.3){\small $\nu$};
\node at (4.4,-1){\small $\alpha$};
\node at (1.8,2.1){\small $\rho(\tau)$};
\node at (3.7,-.2){\small $\rho$};
%
\draw[line width=1.2pt] plot [smooth,tension=1]
               coordinates{(1,2) (.2,-.2) (1,-2.4)};
\draw[line width=1.2pt] plot [smooth,tension=1]
               coordinates{(1,2) (1.8,-.2) (1,-2.4)};
\draw[line width=1.2pt] (1,-2.4) -- (1,-3.1);
\draw[line width=1.2pt] (-1.4,1.6) -- (.2,0);
\draw[line width=1.2pt] (-1.8,1.2) -- (.3,-.9);
\draw [fill] (1,-2.4) circle [radius=0.1];
\draw [fill] (1,-3.1) circle [radius=0.1];
\draw [fill] (.2,0) circle [radius=0.1];
\draw [fill] (.3,-.9) circle [radius=0.1];
\draw [fill] (-.3,-.3) circle [radius=0.1];
\node at (1.4,-2.4){\small $\mu$};
\node at (1.4,-3.1){\small $\tau$};
\node at (.6,0){\small $\gamma$};
\node at (.8,-.9){\small $\psi_*$};
\node at (-.7,-.4){\small $\psi$};

\draw[line width=1.2pt] plot [smooth,tension=1.5]
               coordinates{(0,-5) (-5,-.5) (0,4)};
\draw[line width=1.2pt] plot [smooth,tension=1.5]
               coordinates{(0,-5) (5,-.5) (0,4)};
\draw [fill] (0,4) circle [radius=0.1];
\draw [fill] (0,-5) circle [radius=0.1];
\node at (0,4.5){$1$};
\node at (0,-5.5){$0$};
\end{tikzpicture}
}
\end{picture}
\caption{}\label{fig3}
\end{figure}

According to Lemma~\ref{tau}, either
$\tau\geq\alpha$ or $\tau\geq [\nu,\nu]$.
In the former case we have
$\alpha\leq \tau\leq\psi\leq\delta$.
Since we also have $\beta\leq\delta$,
we derive that
$\theta\leq \nu=\alpha\vee\beta\leq\delta$, which is
false.
In the latter case we have
$[\nu,\nu]\leq\tau\leq \nu$
and $\nu\leq\tau\vee\rho=\rho(\tau)$. But then
$\psi\leq \delta\leq \nu\leq \rho(\tau)$,
contrary to our assumption that $\psi$ is not comparable
to $\rho(\tau)$.
This proves the claim.
\qedsymbdiamond
\end{proof}

It is a consequence of Lemma~\ref{properties}~(4)
that 

\begin{clm} \label{abel}
  $I[\tau,\rho(\tau)]$ is abelian and
  $I[\rho(\tau),1]$ is neutral. \hfill$\diamond$
\end{clm}
  
According to Lemma~\ref{neut_char}, Claims~\ref{C1},
\ref{comparable}, and
\ref{abel} establish that 
$\m a/\tau$ is neutrabelian. This completes the proof.
\end{proof}

Finally, we prove the other
direction of Theorem~\ref{split_cent_thm}.

\begin{thm}
  Let $\m a$ be a finite algebra in a congruence modular
  variety. If $\m a$ is neutrabelian, then $\m a$ has centralizers
  split at $0$.
\end{thm}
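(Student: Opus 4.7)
The plan is to construct, for a given relevant triple $(\delta,\theta,\nu)$ of the neutrabelian algebra $\m a$, an explicit splitting $(\alpha,\beta,0)$ by exploiting the subdirect decomposition. By Lemma~\ref{properties}(4), $\nu=\rho\vee\delta$ where $\rho$ is the solvable radical of $\m a$ (the largest abelian congruence, by Lemma~\ref{properties}(3)), and since $\theta\leq\nu$ strictly above $\delta$ we must have $\rho\not\leq\delta$. So the task reduces to producing $\alpha\leq\rho$ and $\beta\leq\delta$ with $\alpha\vee\beta=\nu$ and $\alpha\wedge\beta=0$.

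First I would write $\m a$ as a subdirect product of its neutrabelian SI quotients: let $\{\delta_i:i\in I\}$ be the completely meet-irreducible congruences of $\m a$ with $\delta_0=\delta$, so $\bigwedge_i\delta_i=0$. Partition $I=I_A\sqcup I_N$ according to whether $\rho\not\leq\delta_i$ (in which case $\m a/\delta_i$ is a neutrabelian SI with abelian monolith and $\rho(\delta_i)=\rho\vee\delta_i$ is the centralizer) or $\rho\leq\delta_i$ (in which case $\m a/\delta_i$ is a neutral SI). Set $\delta^*:=\bigwedge_{i\neq 0}\delta_i$, so $\delta\wedge\delta^*=0$. The natural candidates to consider are the pairs $(\rho,\,\delta\wedge\delta^{**})$ and $(\rho\wedge\delta^{**},\,\delta)$, where $\delta^{**}$ is a suitable meet of a subfamily of $\{\delta_i\}_{i\neq 0}$. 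In either case $\alpha\wedge\beta=0$ follows from $\delta\wedge\delta^*=0$; the content is the join condition.

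Via modularity, $\alpha\vee\beta=\nu$ reduces to finding a complement of $\rho\wedge\delta$ either in $[0,\rho]$ or in $[0,\delta]$: a complement $\alpha\in[0,\rho]$ of $\rho\wedge\delta$ gives $\alpha\vee\delta=\nu$ and $\alpha\wedge\delta=0$ with $\beta=\delta$; a complement $\beta\in[0,\delta]$ of $\rho\wedge\delta$ gives $\rho\vee\beta=\nu$ and $\rho\wedge\beta=0$ with $\alpha=\rho$. So the crux is to show that at least one of these two complementation problems is solvable.

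The main obstacle is exactly this existence-of-complement step. To handle it, I would argue by contradiction using the subdirect representation and the structure given by Lemma~\ref{properties}: if $\rho\wedge\delta$ failed to admit a complement in either interval, then one could locate a CMI $\tau$ of $\m a$ that is forced (by an application in the spirit of Lemma~\ref{tau}) to contain neither a candidate for $\alpha$ nor $[\nu,\nu]$, contradicting either the neutrality of $\m a/\rho$ or the neutrabelian shape of the SI quotient $\m a/\tau$. Concretely, I expect to choose $\delta^{**}$ as a meet over indices $j\in I_A\setminus\{0\}$ selected so that the images $\rho\wedge\delta_j$ inside the abelian interval $[0,\rho]$ together span $\rho$ modulo $\rho\wedge\delta_0$; the meet relation $\bigwedge_{i\in I_A}(\rho\wedge\delta_i)=\rho\wedge\bigwedge_{i\in I_A}\delta_i=0$ together with the congruence distributivity of $[\rho,1]$ (Lemma~\ref{properties}(3)) provides the structural leverage to make one of the two complements exist. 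Verification then closes the split.
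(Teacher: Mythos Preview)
Your setup is sound: the identification $\nu=\rho\vee\delta$ via Lemma~\ref{properties}(4) is correct, and your reduction of the splitting problem to finding a complement of $\rho\wedge\delta$ in either $[0,\rho]$ or $[0,\delta]$ is valid modular-lattice arithmetic. However, the proof has a genuine gap at exactly the point you flag as the ``main obstacle'': you never establish that such a complement exists. The subdirect-decomposition heuristic you sketch does not close the gap. The naive candidate $\alpha=\rho\wedge\bigwedge_{i\neq 0}\delta_i$ certainly satisfies $\alpha\wedge(\rho\wedge\delta)=0$, but there is no reason in a merely modular interval for $(\rho\wedge\delta^*)\vee(\rho\wedge\delta)$ to equal $\rho$; abelian intervals in congruence modular varieties need not be complemented. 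Your fallback, an argument ``in the spirit of Lemma~\ref{tau}'', cannot be invoked: that lemma is stated under the hypothesis that $\m a$ already has centralizers split at $0$ and that a splitting $(\alpha,\beta,0)$ is already in hand, which is precisely what you are trying to produce. Nothing in your outline supplies the missing structural input that would force the complement to exist.

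The paper's proof takes a different route and avoids the complementation problem for $\rho\wedge\delta$ altogether. It fixes $\beta:=[\nu,\nu]$ (which lies below $\delta$ and satisfies $[\beta,\beta]=\beta$), starts with $\alpha=\rho$ (so that $\alpha\vee\beta=\nu$ already), and then shrinks $\alpha$ within $\rho$ so as to minimize $\alpha\wedge\beta$. The heart of the argument is showing $\alpha\wedge\beta=0$ at a minimum: assuming $\alpha\wedge\beta>0$, one picks $\sigma\prec\alpha\wedge\beta$ and a completely meet irreducible $\psi\geq\sigma$ with $\psi\not\geq\alpha\wedge\beta$, and then proves via a careful analysis of the modular lattice generated by $\{\alpha,\beta,\psi\}$ (using the neutrabelian structure of $\m a/\psi$ and the identity $[\beta,\beta]=\beta$) that $\alpha_*:=\alpha\wedge\psi$ still satisfies $\alpha_*\vee\beta=\nu$ while $\alpha_*\wedge\beta=\sigma<\alpha\wedge\beta$, contradicting minimality. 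The choice $\beta=[\nu,\nu]$ is essential here: the perfection $[\beta,\beta]=\beta$ is what forces $\beta\vee\psi$ above the radical of $\m a/\psi$, which drives the lattice relations needed for $\alpha_*\vee\beta=\nu$. Your choice $\beta=\delta$ has no analogous property, so even a minimality argument along these lines would not obviously go through with your $\beta$.
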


\begin{proof}
  Assume that $\m a$ is neutrabelian and that
$(\delta,\theta,\nu)$ is a relevant triple
  of $\m a$. Since $\delta$ is a completely meet irreducible
  congruence of $\m a$, $\m a/\delta$ is a neutrabelian SI.
  Moreover, since $\theta$ is an abelian cover of $\theta$,
$\m a/\delta$ has abelian monolith $\theta/\delta$.  

  By Lemma~\ref{properties}~(1), $\m a$ satisfies (C1)
  so the centralizer of $\theta/\delta$ is abelian.
  This congruence has the form $\nu/\delta$
  for $\nu:=(\delta:\theta)$. The fact that
  $\nu/\delta$ is abelian in $\Con(\m a/\delta)$
  translates exactly into the property that
  $[\nu,\nu]\leq\delta$ in $\Con(\m a)$.
Let $\beta = [\nu,\nu]$. The interval $I[\beta,\nu]$
is abelian (hence solvable), while the interval
$I[\nu,1]$ is neutral (since $\m a/\delta$
is neutrabelian and $\nu=(\delta:\theta)$),
so $\rho(\beta)=\nu$.

By Lemma~\ref{properties}~(3) and (4),
the solvable radical $\rho$ of $\m a$ is the
largest abelian congruence of $\m a$,
and $\rho\vee\beta=\rho(\beta)=\nu$. Our goal
in the rest of this proof is to find
$\alpha\leq \rho$ such that
\begin{enumerate}
\item $\alpha\vee\beta=\nu$, and
\item $\alpha\wedge\beta = 0$.
\end{enumerate}
Such an $\alpha$ will be abelian (since $\alpha\leq\rho$)
and then $(\alpha,\beta,0)$ will satisfy all conditions
necessary to be a triple that splits $(\delta,\theta,\nu)$
at zero.

We already have $\rho\vee\beta=\nu$. Shrink $\rho$
to an $\alpha\leq\rho$ still satisfying (1) $\alpha\vee\beta=\nu$
for which $\alpha\wedge\beta$ is minimal in $\Con(\m a)$.
We will argue that (2) must be satisfied by this congruence $\alpha$.
To obtain a contradiction, assume instead that
$\alpha\wedge\beta > 0$, and choose some $\sigma\prec \alpha\wedge\beta$.
Since
$\sigma <\alpha\wedge\beta\leq\alpha\leq \rho$ and $\rho$ is abelian,
the covering $\sigma\prec \alpha\wedge\beta$ is abelian.
Choose $\psi\geq \sigma$ maximal for $\psi\not\geq \alpha\wedge\beta$,
and let $\psi^* = \psi \vee (\alpha\wedge\beta)$.
The quotient $\m a/\psi$ is subdirectly irreducible with
monolith $\psi^*/\psi$. The interval
$I[\psi,\psi^*]$ is perspective with
the abelian interval $I[\sigma,\alpha\wedge\beta]$,
so the subdirectly irreducible quotient
$\m a/\psi$ has abelian monolith.

We shall argue that, for $\alpha_{*}:=\alpha\wedge\psi\leq\alpha$,
we have (i)~$\beta\wedge \alpha_{*} <\beta\wedge \alpha$
and (ii)~$\beta\vee\alpha_{*}=\nu$, contradicting the minimality
of $\beta\wedge\alpha$. We rewrite the necessary conditions
as (i)~$\beta\wedge (\alpha\wedge\psi) <\beta\wedge \alpha$
and (ii)~$\beta\vee(\alpha\wedge \psi)=\nu$.

\begin{clm} \label{claim1}
{\rm(i)} $(\alpha\wedge\psi)\wedge\beta = \sigma \prec\alpha\wedge\beta$.
\end{clm}

\begin{proof}[Proof of Claim]
We chose $\sigma$ so that $\sigma\prec \alpha\wedge\beta$
and we chose $\psi$ so that $\sigma\leq \psi$ and 
$\alpha\wedge\beta\not\leq \psi$. This is enough to conclude
that $(\alpha\wedge\beta)\wedge\psi = \sigma$.
\qedsymbdiamond
\end{proof}

\begin{clm} \label{claim2}
{\rm(ii)} $\beta\vee(\alpha\wedge \psi)=\nu$.
\end{clm}

\begin{proof}[Proof of Claim]
We shall derive this claim through an examination
of the sublattice of
$\Con(\m a)$ that is generated by $X=\{\alpha,\beta,\psi\}$.
This is a $3$-generated modular lattice, so it has at most
$28$ elements, and it can easily be drawn.
Some elements of this sublattice have already been 
named above, for example $\alpha\vee\beta = \nu$
and $\alpha\wedge\beta\wedge\psi = \sigma$ (the
least element of the sublattice).

\begin{figure}
\setlength{\unitlength}{1truemm}
\begin{picture}(80,70)
\put(10,0)
{
\begin{tikzpicture}[scale=.8]
\draw [fill] (0,4) circle [radius=0.1];
\draw [fill] (1,3) circle [radius=0.1];
\draw [fill] (1,5) circle [radius=0.1];
\draw [fill] (2,1) circle [radius=0.1];
\draw [fill] (2,2) circle [radius=0.1];
\draw [fill] (2,4) circle [radius=0.1];
\draw [fill] (2,6) circle [radius=0.1];
\draw [fill] (2,7) circle [radius=0.1];
\draw [fill] (3,0) circle [radius=0.1];
\draw [fill] (3,1) circle [radius=0.1];
\draw [fill] (3,2) circle [radius=0.1];
\draw [fill] (3,3) circle [radius=0.1];
\draw [fill] (3,4) circle [radius=0.1];
\draw [fill] (3,5) circle [radius=0.1];
\draw [fill] (3,6) circle [radius=0.1];
\draw [fill] (3,7) circle [radius=0.1];
\draw [fill] (3,8) circle [radius=0.1];
\draw [fill] (4,1) circle [radius=0.1];
\draw [fill] (4,2) circle [radius=0.1];
\draw [fill] (4,4) circle [radius=0.1];
\draw [fill] (4,6) circle [radius=0.1];
\draw [fill] (4,7) circle [radius=0.1];
\draw [fill] (5,3) circle [radius=0.1];
\draw [fill] (5,5) circle [radius=0.1];
\draw [fill] (6,4) circle [radius=0.1];

\draw [fill] (1,4) circle [radius=0.1];
\draw [fill] (2,3) circle [radius=0.1];
\draw [fill] (2,5) circle [radius=0.1];

\draw[line width=1.2pt] (3,1) -- (0,4) -- (3,7) -- (6,4) -- (3,1);
\draw[line width=1.2pt] (2,2) -- (1,3) -- (4,6) -- (5,5) -- (2,2);
\draw[line width=1.2pt] (4,2) -- (5,3) -- (2,6) -- (1,5) -- (4,2);
\draw[line width=1.2pt] (1,4) -- (4,7) -- (3,8) -- (2,7) -- (3,6) -- (3,2) -- (2,1) -- (3,0) -- (4,1) -- (1,4);
\draw[line width=1.2pt] (2,5) -- (3,4) -- (2,3);
\draw[line width=1.2pt] (2,1) -- (2,2);
\draw[line width=1.2pt] (3,0) -- (3,1);
\draw[line width=1.2pt] (4,1) -- (4,2);
\draw[line width=1.2pt] (2,6) -- (2,7);
\draw[line width=1.2pt] (3,7) -- (3,8);
\draw[line width=1.2pt] (4,6) -- (4,7);

\node at (-.4,4){$\beta$};
\node at (.7,4){$\alpha$};
\node at (1.5,7){$\nu$};
\node at (6.5,4){$\psi$};
\node at (5.3,5.3){$\psi^*$};
\node at (1.2,1){$\alpha\wedge\beta$};
\node at (3,-.5){$\sigma$};
\node at (4.8,1){$\alpha\wedge\psi$};
\end{tikzpicture}
}
\end{picture}
\caption{}\label{fig4}
\end{figure}

The sublattice of
$\Con(\m a)$ that is generated by $X=\{\alpha,\beta,\psi\}$
is a quotient of the modular
lattice freely generated by the set $X$, which is drawn in Figure~\ref{fig4}.
But the sublattice of our lattice
$\Con(\m a)$ that is generated
by $X$ is not \emph{freely} generated by $X$, it is a proper quotient
of the lattice in Figure~\ref{fig4}. In particular,
the sublattice of $\Con(\m a)$ generated by $X$
must satisfy the following relations.

\begin{subclm} \label{subclaim}
  \mbox{}

\begin{itemize}
    \item $\beta\vee\psi=\alpha\vee\beta\vee\psi$, and
    \item $\alpha\vee\psi=(\alpha\wedge\beta)\vee\psi$.
\end{itemize}
\end{subclm}

To prove the first bulleted item note that,
since $\beta=[\nu,\nu]$, the interval
from $[\beta,\beta]$ to $\nu$ is $2$-step
solvable. Hence $\nu \leq \rho([\beta,\beta])= [\beta,\beta]\vee\rho$,
implying that the interval
from $[\beta,\beta]$ to $\nu$ is abelian. Therefore
$\beta=[\nu,\nu]\leq [\beta,\beta]\leq \beta$, or just
$\beta=[\beta,\beta]$.
From this we derive from {\rm (C1)} that if
$x\leq \beta$, then
\[
x = x\wedge\beta = x\wedge [\beta,\beta] = [x\wedge \beta,\beta] = [x,\beta].
\]
In particular, when $x=\alpha\wedge\beta$ we get that
$[\alpha\wedge\beta,\beta]=\alpha\wedge\beta\not\leq \sigma$,
which we can express as $\beta\not\leq (\sigma:\alpha\wedge\beta)$.
But the interval $I[\sigma:\alpha\wedge\beta]$ is perspective
with $I[\psi,\psi^*]$, so $(\sigma:\alpha\wedge\beta)=(\psi:\psi^*)$,
and we derive that $\beta\not\leq (\psi:\psi^*)$.

The SI quotient $\m a/\psi$ is neutrabelian, and
$(\beta\vee\psi)/\psi\not\leq (0:\psi^*/\psi)$,
so $\beta\vee\psi > \rho(\psi)$.
Since $\nu\geq \beta$ and $\nu\solv \beta$
we have $(\nu\vee\psi)/\psi\geq (\beta\vee\psi)/\psi$ and
$(\nu\vee\psi)/\psi \solv (\beta\vee\psi)/\psi$.
But solvably related congruences above the radical
of a neutrabelian SI are equal,
so $(\nu\vee\psi)/\psi = (\beta\vee\psi)/\psi$. Hence
$\nu\vee\psi=\beta\vee\psi>\rho(\psi)=\rho\vee\psi$.
Now $\nu=\beta\vee\alpha$, so we get
$\alpha\vee\beta\vee\psi = \nu\vee\psi=\beta\vee\psi$,
completing the proof of the first bulleted item
of the subclaim.

For the second bulleted item of the subclaim, we carry along
our earlier conclusions that 
$\alpha\vee\beta\vee\psi=\beta\vee\psi > \rho(\psi)$.
This strict inequality means that $(\beta\vee\psi)/\psi$ is strictly
above the radical of the neutrabelian SI quotient
$\m a/\psi$. From the way the commutator behaves
on a neutrabelian algebra, 
$(\beta\vee\psi)/\psi$ must fail to centralize
any proper interval in $\Con(\m a/\psi)$ that lies below 
$(\beta\vee\psi)/\psi$.
In particular, if 
$(\alpha\wedge\beta)\vee\psi<\alpha\vee\psi$, then
we must have
\[
  [(\beta\vee\psi)/\psi,(\alpha\vee\psi)/\psi]\not\leq ((\alpha\wedge\beta)\vee\psi)/\psi.
  \]
But this is in contradiction with basic properties of the commutator
(additivity, submultiplicativity, homomorphism property),
as we see in the calculation
\[
[\beta\vee\psi,\alpha\vee\psi]\vee\psi = [\beta,\alpha]\vee\psi\leq (\alpha\wedge\beta)\vee\psi.
\]
This proves that second bulleted item of the subclaim.

\bigskip

From Subclaim~\ref{subclaim} we know that the sublattice of
$\Con(\m a)$ generated by $X=\{\alpha,\beta,\psi\}$
is a homomorphic image of the modular lattice presented by
\[
\langle \alpha, \beta, \psi\;|\;
\beta\vee\psi=\alpha\vee\beta\vee\psi,\;
\alpha\vee\psi=(\alpha\wedge\beta)\vee\psi\rangle.
\]
This lattice can be constructed as a quotient of the lattice
in Figure~\ref{fig4}, and it is drawn in Figure~\ref{fig5}.
Observe that $\beta\vee(\alpha\wedge\psi)=\nu$
in this lattice, so this relation
will hold in any quotient, such as the sublattice
$\Con(\m a)$ generated by $X=\{\alpha,\beta,\psi\}$.
This ends the proof of Claim~\ref{claim2}.
\qedsymbdiamond
\end{proof}

\begin{figure}
\setlength{\unitlength}{1truemm}
\begin{picture}(80,50)
\put(16,0)
{
\begin{tikzpicture}[scale=.8]
\draw [fill] (1,3) circle [radius=0.1];
\draw [fill] (2,1) circle [radius=0.1];
\draw [fill] (2,2) circle [radius=0.1];
\draw [fill] (2,4) circle [radius=0.1];
\draw [fill] (3,0) circle [radius=0.1];
\draw [fill] (3,1) circle [radius=0.1];
\draw [fill] (3,2) circle [radius=0.1];
\draw [fill] (3,3) circle [radius=0.1];
\draw [fill] (3,5) circle [radius=0.1];
\draw [fill] (4,1) circle [radius=0.1];
\draw [fill] (4,2) circle [radius=0.1];
\draw [fill] (4,4) circle [radius=0.1];
\draw [fill] (5,3) circle [radius=0.1];

\draw[line width=1.2pt] (3,1) -- (5,3) -- (3,5) -- (1,3) -- (3,1);
\draw[line width=1.2pt] (2,4) -- (4,2) -- (4,1) -- (3,0) -- (2,1) -- (2,2) -- (4,4);
\draw[line width=1.2pt] (3,0) -- (3,1);
\draw[line width=1.2pt] (3,2) -- (3,3);
\draw[line width=1.2pt] (2,1) -- (3,2) -- (4,1);

\node at (.5,3){$\beta$};
\node at (3.4,2){$\alpha$};
\node at (1.5,4){$\nu$};
\node at (5.5,3){$\psi$};
\node at (4.3,4.3){$\psi^*$};
\node at (1.2,1){$\alpha\wedge\beta$};
\node at (3,-.5){$\sigma$};
\node at (4.8,1){$\alpha\wedge\psi$};
\end{tikzpicture}
}
\end{picture}
\caption{}\label{fig5}
\end{figure}

Claims~\ref{claim1} and \ref{claim2}
contradict the minimality of $\alpha\wedge\beta$
in the situation where $\alpha\wedge\beta>0$,
so we conclude that $\alpha\wedge\beta=0$.
This shows that the relevant triple
$(\delta,\theta,\nu)$ is split at $0$
by $(\alpha,\beta,0)$.
\end{proof}

\bibliographystyle{plain}

\end{document}